\newcommand{\linenumberscmd}{}
\newtheorem{thm}{Theorem}[section]
\newtheorem{cor}[thm]{Corollary}
\newtheorem{lem}[thm]{Lemma}
\newtheorem{prop}[thm]{Proposition}
\newtheorem{proposition}[thm]{Proposition}
\theoremstyle{definition}
\newtheorem{defn}[thm]{Definition}
\newtheorem{quest}[thm]{Question}
\newtheorem{rem}[thm]{Remark} 
\numberwithin{equation}{thm}
\newcommand{\N}{\mathbb{N}}
\newcommand{\setN}{\mathbb{N}}
\newcommand{\res}{\mathbin{\upharpoonright}}
\newcommand{\convergesto}{\mathbin{\downarrow =}}
\newcommand{\M}{\mathscr{M}}
\newcommand{\Lang}{\mathsf{L}}
\newcommand{\emp}{\emptyset}
\newcommand{\RCA}{\mathsf{RCA}_0}
\newcommand{\RCAo}{\mathsf{RCA}_0}
\newcommand{\ACA}{\mathsf{ACA}_0}
\newcommand{\ACAo}{\mathsf{ACA}_0}
\newcommand{\WKLo}{\mathsf{WKL}_0}
\newcommand{\CA}{\mathsf{CA}_0}
\newcommand{\QF}{\mathsf{QF}}
\newcommand{\FCP}{\mathsf{FCP}}
\newcommand{\NCE}{\mathsf{NCE}}
\newcommand{\cl}{\operatorname{cl}}
\newcommand{\AL}{\mathsf{CE}}
\let\oldsetminus-
\newcommand{\chat}[1]{\widehat{#1}}
\title{Reverse mathematics and properties of finite character}
\author{Damir D. Dzhafarov}
\address{\hspace*{-\parindent}Department of Mathematics\\
  University of Notre Dame\\
Department of Mathematics\\
University of Notre Dame\\
255 Hurley Hall\\
Notre Dame, Indiana 46556 USA} \email{ddzhafar@nd.edu}
\author{Carl Mummert}
\address{\hspace*{-\parindent}Department of Mathematics\\
  Marshall University\\
  1 John Marshall Drive\\
  Huntington, West Virginia 25755 USA} \email{mummertc@marshall.edu}
\thanks{The authors are grateful to Denis Hirschfeldt, Antonio
  Montalb\'{a}n, and Robert Soare for valuable comments and
  suggestions, and to an anonymous referee who suggested an 
improvement that strengthened Proposition~\ref{P:NCE_ideals}. 
 The first author was partially supported by an NSF
  Graduate Research Fellowship and an NSF Postdoctoral Fellowship.}
\subjclass[2010]{Primary 03B30, 03F35, Secondary 03E25}
\keywords{Reverse mathematics, finite character, axiom of choice, closure operator}
\date{September 15, 2011} 
\begin{document}

\begin{abstract}
  We study the reverse mathematics of the principle stating that, for every
  property of finite character, every set has a
  maximal subset satisfying the property. In the context of set
  theory, this variant of Tukey's lemma is equivalent to the axiom of
  choice. We study its behavior in the context of second-order 
arithmetic, where it applies to sets of natural numbers only, 
and give a full characterization of
% the strength of   the principle
its strength in terms of the quantifier structure of the formula
  defining the property. We then study the interaction between
  properties of finite character and finitary closure operators, and
  the interaction between these properties and a class of
  nondeterministic closure operators.
\end{abstract}

\maketitle

\tableofcontents

\linenumberscmd

\section{Introduction} 

A formula $\varphi$ with one free set variable is
of {\em finite character}, and has the {\em finite
character property}, if $\varphi(\emp)$ holds and, for every set $A$,
$\varphi(A)$ holds if and only if $\varphi(F)$ holds for every finite
$F \subseteq A$. In this paper, we restrict our attention to formulas of second-order
arithmetic, and consider several variants and restrictions of
the principle $\FCP$ (Definition~\ref{def:fcp}) which asserts that for
every formula of finite character, every subset of $\setN$ has a
maximal subset satisfying that formula.  Because the empty set
satisfies any formula of finite character, the soundness 
of this principle in second-order arithmetic can be verified in $\mathsf{ZFC}$ by straightforward 
application of Zorn's lemma. Detailed definitions of second-order arithmetic and
the subsystems studed in this paper 
are given by Simpson~\cite{Simpson-2009}.

The principle $\AL$ 
(Definition~\ref{def:ce})  asserts that given sets $A \subseteq B
\subseteq \setN$, a formula $\varphi$ of finite character and a finitary
closure operator $D$, such that $A$ is a $D$-closed set satisfying the
formula, there is a set $X$ which is maximal with respect to the
conditions that $A \subseteq X \subseteq B$, $\varphi(X)$ holds, and $X$ is
$D$-closed.  In the third section, we give a full characterization of the strength of fragments of $\AL$ in terms of the complexity of the formulas of finite character to which they apply.

We can further generalize $\AL$ by
replacing the finitary closure operator with a more general kind of operator
which we name a \textit{nondeterministic closure operator}. The corresponding 
principle, $\NCE$ (Definition~\ref{D:NCE}), is studied in the final section, where a full characterization of its strength is obtained. 

\newcommand{\FIP}{\mathsf{FIP}}

We were led to study the reverse mathematics of $\FCP$ by our separate
work~\cite{DM-201X} on the principle $\FIP$ which states that every
countable family of subsets of $\mathbb{N}$ has a maximal subfamily
with the finite intersection property. All the
principles studied there are consequences of appropriate restrictions
of $\FCP$. Similarly,
Propositions~\ref{p:alextend}~and~\ref{P:NCE_ideals} below demonstrate
how $\AL$ and $\NCE$ can be used to prove facts about countable
algebraic objects in second-order arithmetic.  In light of these
applications, we find it worthwile to have a complete understanding of
the reverse mathematics strengths of these principles.

Considering this paper together 
with our work on $\FIP$ gives a new example of two principles, $\FCP$ and
$\FIP$, which are each equivalent to the axiom of choice when formalized in set theory, but which have drastically different strengths when formalized in second-order arithmetic. The axiom scheme for $\FCP$ is equivalent to full comprehension in second-order arithmetic, while $\FIP$ is weaker than $\ACAo$ and incomparable with $\WKLo$. 

\section{Properties of finite character}\label{s:pfc}

We begin with the study of various forms of the following principle.

\begin{defn} \label{def:fcp}
The following scheme is defined in $\RCA$.
\begin{list}{\labelitemi}{\leftmargin=0em}\itemsep2pt
\item[]($\FCP$) For each $\Lang_2$ formula $\varphi$ of finite character, which
may have arbitrary set parameters, every set $A$ has a
\mbox{$\subseteq$-maximal} subset $B$ such that $\varphi(B)$ holds.
\end{list}
\end{defn}

\noindent  $\FCP$ is analogous to the set-theoretic principle 
$\mathsf{M} \, 7$ in
the catalog of Rubin and Rubin~\cite{RR-1985}, which is equivalent to
the axiom of choice~\cite[p.~34 and Theorem~4.3]{RR-1985}.

In order to better gauge the reverse mathematical strength of $\FCP$,
we consider restrictions of the formulas to which it applies.  As with
other such ramifications, we will primarily be interested in
restrictions to classes in the arithmetical and analytical
hierarchies.  In particular, for each $i \in \{0,1\}$ and $n \geq 0$,
we make the following definitions:
\begin{itemize}
\item $\Sigma^i_n\text{-}\FCP$ is the restriction of $\FCP$ to
$\Sigma^i_n$ formulas;

\item $\Pi^i_n\text{-}\FCP$ is the restriction of $\FCP$ to $\Pi^i_n$
formulas;

\item $\Delta^i_n\text{-}\FCP$ is the scheme which says that for every
$\Sigma^i_n$ formula $\varphi(X)$ and every $\Pi^i_n$ formula
$\psi(X)$, if $\varphi(X)$ is of finite character and
\[ 
(\forall X)[\varphi(X) \Longleftrightarrow \psi(X)],
\]
then every set $A$ has a $\subseteq$-maximal set $B$ such that
$\varphi(B)$ holds.

\end{itemize} 
We also define $\QF\text{-}\FCP$ to be the restriction of $\FCP$ to the
class of quantifer-free formulas without parameters.

The following proposition demonstrates two monotonicity properties of formulas 
of finite character.

\begin{prop}\label{p:fcmonotone}
Let $\varphi(X)$ be a formula of finite character. The following are provable in $\RCAo$:
\begin{enumerate}
\item if $A \subseteq B$ and $\varphi(B)$ holds then $\varphi(A)$
holds;
\item if $A_0 \subseteq A_1 \subseteq A_2 \subseteq \cdots$ is a
sequence of sets such that $\varphi(A_i)$ holds for each $i\in\setN$,
and $\bigcup_{i \in \setN} A_i$ exists, then
 $\varphi(\bigcup_{i \in \setN} A_i)$ holds.
\end{enumerate}
\end{prop}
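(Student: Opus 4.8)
The plan is to prove both parts directly from the definition of finite character, unpacking the biconditional "$\varphi(A)$ holds iff $\varphi(F)$ holds for every finite $F \subseteq A$."

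For part (1), suppose $A \subseteq B$ and $\varphi(B)$ holds. By the defining property of finite character applied to $B$, every finite subset $F \subseteq B$ satisfies $\varphi(F)$. Now let $F \subseteq A$ be finite; since $A \subseteq B$, we have $F \subseteq B$, so $F$ is a finite subset of $B$ and hence $\varphi(F)$ holds. Thus every finite subset of $A$ satisfies $\varphi$, and applying the defining property in the other direction gives $\varphi(A)$. The only point requiring care is that this reasoning is available in $\RCAo$: the characterization of finite character is a first-order (over the given formula $\varphi$) statement quantifying over finite sets, which are coded by numbers, so no set comprehension beyond $\Delta^0_1$ is needed, and the argument is merely a logical manipulation of the hypothesis that $\varphi$ has finite character.

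For part (2), let $A_0 \subseteq A_1 \subseteq \cdots$ be as given, with $U = \bigcup_{i} A_i$ existing as a set. Again I would use the finite-character characterization applied to $U$: it suffices to show that every finite $F \subseteq U$ satisfies $\varphi(F)$. The key step is to observe that any finite $F \subseteq U$ is contained in some single $A_i$. This follows because $F$ is finite, so it has a largest element (or more precisely finitely many elements), each of which enters $U$ at some stage, and by the nesting of the $A_i$ we may take $i$ to be the maximum of these finitely many stages; this bounded search is unproblematic in $\RCAo$. Once $F \subseteq A_i$, part (1) (or directly the finite-character property of $A_i$) yields $\varphi(F)$. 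Since every finite subset of $U$ satisfies $\varphi$, the defining property gives $\varphi(U)$.

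The main obstacle, such as it is, is purely the bookkeeping of formalizing "every finite subset is contained in some $A_i$" within $\RCAo$: one must confirm that finding, for a given finite $F$ coded by a number, a stage $i$ with $F \subseteq A_i$ is a computable (hence $\Delta^0_1$) task relative to the sequence parameter, so that no appeal to arithmetical comprehension or collection is hidden in the argument. Since $F$ has a fixed finite set of elements and the $A_i$ are uniformly given, the requisite search is bounded and this causes no difficulty. Everything else is a direct translation of the definition of finite character, so I expect the proof to be short.
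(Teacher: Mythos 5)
Your overall approach is the same as the paper's: part (1) is immediate from the definition, and the entire content of part (2) is the lemma that every finite $F \subseteq \bigcup_{i} A_i$ is contained in a single $A_j$. The one place where your justification goes astray is in how you certify that lemma inside $\RCAo$. You describe locating a stage $i$ with $F \subseteq A_i$ as a ``bounded search'' and a ``computable (hence $\Delta^0_1$) task,'' but it is neither: for an element $x$ of $F$, a stage $i$ with $x \in A_i$ is found only by an unbounded search, with no a priori bound. The genuine issue is not comprehension but collection/induction: from ``for each $x \in F$ there exists $i$ with $x \in A_i$'' one must pass to a single $i$ working for all of $F$, i.e., one must bound finitely many existential witnesses simultaneously (your ``maximum of these finitely many stages'' presupposes exactly this). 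That step is an instance of $\Sigma^0_1$ bounding, or, as the paper does it, of induction on the $\Sigma^0_1$ formula $\psi(n,F) \equiv (\exists m)(\forall i<n)(i \in F \Rightarrow i \in A_m)$ with $F$ a set parameter; both are available in $\RCAo$ because it includes $\Sigma^0_1$ induction. So your conclusion stands and the informal idea is the right one, but the stated reason---that the search is bounded---is not the reason it works, and in a reverse-mathematics argument this is precisely the point that must be pinned down rather than waved through.
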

\begin{proof}
  The proof of (1) is immediate from the definitions. For (2), the key
  point is to show that if $F$ is a finite subset of $\bigcup_{i \in
    \setN} A_i$ then there is some $j\in\setN$ with $F \subseteq
  A_j$. This follows from induction on the $\Sigma^0_1$ formula
  $\psi(n,F) \equiv (\exists m)(\forall i < n)(i \in F \Longrightarrow i
  \in A_m)$, in which $F$ is a set parameter.
\end{proof}

Our first theorem in this section characterizes 
most of the above restrictions of
$\FCP$ (see Corollary~\ref{c:fcpstrength}).  We draw particular attention to
part~(2) of the theorem, where $\Sigma^0_1$ does not appear in the list of
classes of formulas.  The reason behind this will be made apparent by
Theorem~\ref{P:Sig1_RCA}.

\begin{thm}\label{thm_main_fcp} 
For $i \in \{0,1\}$ and $n \geq 1 $,
let $\Gamma$ be any of $\Pi^i_n$, $\Sigma^i_n$, or~$\Delta^i_n$.
\begin{enumerate}
\item $\Gamma$-$\FCP$ is provable in $\Gamma$-$\CA$;

\item If\/ $\Gamma$ is $\Pi^0_n$, $\Pi^1_n$, $\Sigma^1_n$, or
$\Delta^1_n$, then $\Gamma$-$\FCP$ implies $\Gamma$-$\CA$
over~$\RCAo$.

\end{enumerate}
\end{thm}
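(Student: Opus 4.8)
The plan is to treat the two parts separately, both resting on the monotonicity recorded in Proposition~\ref{p:fcmonotone}. The one observation I would isolate first, since it is used in both directions, is a reduction of maximality to a purely local condition: if $\varphi(B)$ holds and $B \subseteq A$, then $B$ is $\subseteq$-maximal among the subsets of $A$ satisfying $\varphi$ if and only if $\varphi(B \cup \{a\})$ fails for every $a \in A$ with $a \notin B$. The nontrivial direction is immediate from Proposition~\ref{p:fcmonotone}(1): if some $B' \supsetneq B$ with $B' \subseteq A$ satisfied $\varphi$, then choosing $a \in B'$ with $a \notin B$ gives $B \cup \{a\} \subseteq B'$, whence $\varphi(B \cup \{a\})$ holds, contradicting the local condition.

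For part (1) I would reason in $\Gamma$-$\CA$. First, using $\Gamma$-comprehension (and, in the $\Delta^i_n$ case, the hypothesized equivalence of the given $\Sigma^i_n$ formula with a $\Pi^i_n$ formula, which makes the relevant set $\Delta^i_n$-definable) I form the set $R = \{e : \varphi(D_e)\}$, where $D_e$ is the finite set with canonical index $e$; substituting a $\Delta^0_1$-coded finite set for $X$ does not raise the complexity of $\varphi$, so $R$ exists. With $R$ and $A$ as parameters the remaining construction is primitive recursive and so is available in $\RCAo \subseteq \Gamma$-$\CA$: put $F_0 = \emptyset$, and at stage $s$ set $F_{s+1} = F_s \cup \{s\}$ if $s \in A$ and the canonical index of $F_s \cup \{s\}$ lies in $R$, and $F_{s+1} = F_s$ otherwise; let $B = \bigcup_s F_s$. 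Each $F_s$ satisfies $\varphi$ by construction, so by Proposition~\ref{p:fcmonotone}(1) every finite subset of $B$ (each lying in some $F_s$) satisfies $\varphi$, and hence $\varphi(B)$ holds by finite character. If $a \in A$ and $a \notin B$, then $a$ was rejected at stage $a$, so $\varphi(F_a \cup \{a\})$ fails; since $F_a \cup \{a\} \subseteq B \cup \{a\}$, the contrapositive of Proposition~\ref{p:fcmonotone}(1) gives $\neg\varphi(B \cup \{a\})$, and the local criterion makes $B$ maximal.

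For part (2) I would derive $\Gamma$-$\CA$ from $\Gamma$-$\FCP$ by a single canonical encoding. Given $\theta(n)$ in the class $\Gamma$, take $A = \N$ and let $\varphi(X) \equiv (\forall n)(n \in X \to \theta(n))$. This $\varphi$ is of finite character ($\varphi(\emptyset)$ holds vacuously, and $\varphi(X)$ holds iff it holds of every finite subset of $X$), and its maximal sets are transparent: any $B$ with $\varphi(B)$ satisfies $B \subseteq \{n : \theta(n)\}$, while maximality forces the reverse inclusion, since for $n \notin B$ the failure of $\varphi(B \cup \{n\})$ can only be caused by $\neg\theta(n)$. Hence the maximal set provided by $\Gamma$-$\FCP$ is exactly $\{n : \theta(n)\}$, which is $\Gamma$-$\CA$.

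The crux of part (2), and the step I expect to be the main obstacle, is verifying that $\varphi$ itself lies in $\Gamma$, that is, that prefixing the number quantifier $(\forall n)$ does not leave the class. For $\Gamma = \Pi^0_n$ this is clear, as a universal number quantifier preserves $\Pi^0_n$; for $\Gamma = \Pi^1_n$ the quantifier $(\forall n)$ is absorbed into the leading universal set quantifier by pairing a number with a set. The delicate cases are $\Sigma^1_n$ and $\Delta^1_n$, where one must exhibit $\varphi$ in genuine $\Sigma^1_n$ form; this is precisely where the closure of the analytical classes under number quantification must be invoked, and where care is needed (for $\Delta^1_n$ one pairs the $\Pi^1_n$ rendering $(\forall n)(n \in X \to \pi(n))$ with the $\Sigma^1_n$ rendering obtained from the $\Sigma^1_n$ defining formula, the two agreeing by hypothesis). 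This same point explains the omission from the list: a universal number quantifier raises a $\Sigma^0_n$ formula to $\Pi^0_{n+1}$, so $\Sigma^0_n$ cannot be treated this way, which accounts for the absence of $\Sigma^0_1$ and dovetails with the weakness recorded in Theorem~\ref{P:Sig1_RCA}. It may be convenient to note at the outset that $\Sigma^1_n$-$\CA$ and $\Pi^1_n$-$\CA$ coincide, so that the $\Sigma^1_n$ reversal can, if necessary, be routed through the $\Pi^1_n$ encoding.
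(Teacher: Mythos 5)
Your proposal is correct and follows essentially the same route as the paper: part (1) is the same greedy construction (the paper packages your set $R$ as a $\Gamma$-definable function $g$ and uses Lemma~\ref{l:finiteset} for the complexity claim, then argues maximality exactly via the local criterion you isolate), and part (2) uses the identical encoding $\psi(X) \equiv (\forall n)[n \in X \to \varphi(n)]$ with the same appeal to closure under universal number quantification.
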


The proof of this theorem will make use of the following technical lemma, which is needed only because there are no term-forming operations for sets in the
language $\Lang_2$ of second-order arithmetic. For example, there is no term in $\Lang_2$ that takes a set
$X$ and a number $n$ and returns $X \cup D_n$ where, as in the 
rest of this paper, $D_n$ denotes the finite set with canonical index $n$, or $\emptyset$ if $n$ is not a canonical index. 
The moral of
the lemma is that such terms can be interpreted into $\Lang_2$ in a
natural way.

The coding of finite sets by their canonical indices can be formalized
in $\RCAo$ in such a way that the predicate $i \in D_n$ is defined by
a formula $\rho(i,n)$ with only bounded quantifiers, and such that the
set of canonical indices is also definable by a bounded-quantifier
formula~\cite[Theorem II.2.5]{Simpson-2009}. Moreover, $\RCAo$ proves
that every finite set has a canonical index. We use the notation $Y =
D_n$ to abbreviate the formula $(\forall i)[ i \in Y \Longleftrightarrow
\rho(i,n)]$, along with similar notation for subsets of finite sets.

\begin{lem}\label{l:finiteset} 
Let $\varphi(X)$ be a formula with one
free set variable. There is a formula $\chat{\varphi}(x)$ with one
free number variable such that $\RCA$ proves
\begin{equation}\label{e:finiteset} 
(\forall A)(\forall n)[ A = D_n
  \Longrightarrow (\varphi(A) \Longleftrightarrow \chat{\varphi}(n))].
\end{equation}
Moreover, we may take $\chat{\varphi}$ to have the same
complexities in the arithmetical and analytic hierarchies
as~$\varphi$.
\end{lem}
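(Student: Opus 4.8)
The plan is to obtain $\chat{\varphi}$ by a purely syntactic substitution and then verify the equivalence by induction on the structure of $\varphi$. Since $X$ is the only free set variable of $\varphi$, and set variables occur in $\Lang_2$ only inside membership atoms, $X$ occurs in $\varphi$ only inside atomic subformulas of the form $t \in X$ for a number term $t$. I would define $\chat{\varphi}(n)$ to be the formula obtained from $\varphi$ by replacing each such atom $t \in X$ with $\rho(t,n)$, where $n$ is a fresh number variable not occurring in $\varphi$. Before carrying out the substitution I would rename the bound variables of $\varphi$ so that $n$ is not captured and so that $X$ is never reused as a bound variable; this is harmless and does not affect the complexity of $\varphi$.

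To prove \eqref{e:finiteset} I would argue by induction on the structure of $\varphi$, at the meta-level, proving the more general claim that for every subformula $\psi$ of $\varphi$,
\[
\RCA \vdash (\forall A)(\forall n)[\, A = D_n \Longrightarrow (\psi(A) \Longleftrightarrow \chat{\psi}(n)) \,],
\]
where $\psi(A)$ denotes $\psi$ with $A$ substituted for $X$, the formula $\chat{\psi}$ is the result of applying the same substitution to $\psi$, and any additional free variables of $\psi$ (those bound in $\varphi$ but exposed in the subformula $\psi$) are held fixed throughout. For the base case, the only nontrivial atoms are those of the form $t \in X$: under the hypothesis $A = D_n$, which by definition abbreviates $(\forall i)[\, i \in A \Longleftrightarrow \rho(i,n)\,]$, we immediately obtain $t \in A \Longleftrightarrow \rho(t,n)$, which is exactly the required equivalence. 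Atoms not mentioning $X$ are left unchanged, and there the equivalence is trivial. The inductive steps for the propositional connectives and for both the number and set quantifiers are immediate, since the substitution commutes with each logical operation and, by the renaming convention, neither $n$ nor $X$ is disturbed by any quantifier of $\psi$.

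The complexity claim then follows because $\rho$ has only bounded quantifiers: replacing the $\Delta^0_0$ atoms $t \in X$ by the $\Delta^0_0$ formula $\rho(t,n)$ introduces no new unbounded number or set quantifiers, so the quantifier prefix of $\chat{\varphi}$ is exactly that of $\varphi$ and the two formulas lie in the same class of the arithmetical and analytical hierarchies. I do not expect any serious obstacle here; the only points requiring care are formulating the induction hypothesis over all subformulas of $\varphi$ (so that variables bound in $\varphi$ are treated as free parameters at intermediate stages) and ensuring the substitution is capture-avoiding. Once these are arranged, each step of the induction is routine.
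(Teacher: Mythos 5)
Your proposal is correct and follows essentially the same route as the paper: define $\chat{\varphi}$ by replacing each atom $t \in X$ with $\rho(t,n)$ and verify the equivalence by metainduction, with the complexity claim following because $\rho$ has only bounded quantifiers. The only (inessential) difference is that the paper first puts $\varphi$ in prenex normal form and inducts separately on the matrix and the quantifier prefix, whereas you perform a single capture-avoiding structural induction over all subformulas.
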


\begin{proof} 
Let $\rho(i,n)$ be the formula defining the relation $i
\in D_n$, as discussed above. We may assume $\varphi$ is written in
prenex normal form. Form $\chat{\varphi}(n)$ by replacing each
occurrence $t \in X$ of $\varphi$, $t$ a term, with the formula
$\rho(t,n)$.

Let $\psi(X, \bar{Y}, \bar{m})$ be the quantifier-free matrix of
$\varphi$, where $\bar{Y}$ and $\bar{m}$ are sequences of variables
that are quantified in~$\varphi$. Similarly, let
$\chat{\psi}(n,\bar{Y},\bar{m})$ be the matrix of $\chat{\varphi}$.
Fix any model $\M$ of $\RCA$ and fix $n,A \in \M$ such that $\M
\models A = D_n$. A straightforward metainduction on the structure of
$\psi$ proves that
\[
\M \models (\forall \bar{Y})(\forall \bar{m})[ \psi(A, \bar{Y},
\bar{m}) \Longleftrightarrow \chat{\psi}(n, \bar{Y}, \bar{m})].
\] 

The key point is that the atomic formulas in $\psi(A, \bar{Y},
\bar{m})$ are the same as those in $\chat{\psi}(n,\bar{Y},\bar{m})$,
with the exception of formulas of the form $t \in A$, which have been
replaced with the equivalent formulas of the form~$\rho(t,n)$.

A second metainduction on the quantifier structure of $\varphi$ shows
that we may adjoin quantifiers to $\psi$ and $\chat{\psi}$ until we
have obtained $\varphi$ and $\chat{\varphi}$, while maintaining
logical equivalence.  Thus every model of $\RCA$
satisfies~(\ref{e:finiteset}).

Because $\rho$ has only bounded quantifiers, the substitution required
to pass from $\varphi$ to $\chat{\varphi}$ does not change the
complexity of the formula.
\end{proof}

We shall sometimes identify a finite set with its canonical index. Thus, if~$F$ is finite and~$n$ is its canonical index, we may
write~$\chat{\varphi}(F)$ for~$\chat{\varphi}(n)$.

\begin{proof}[Proof of Theorem~\ref{thm_main_fcp}] 
For~(1), let $\varphi(X)$ and $A = \{a_i : i \in \N\}$ be an
instance of $\Gamma$-$\FCP$. Define $g \colon 2^{<\N} \times \N \to
\{0,1\}$ by
\[ 
g(\tau,i) =
\begin{cases} 
  1 & \text{if } \chat{\varphi}(\{ a_j : \tau(j) \convergesto 1\} 
       \cup \{a_i\}) \text{ holds},\\ 
0 & \text{otherwise}.
\end{cases}
\] 
where $\chat{\varphi}$ is as in the lemma.
 The function $g$ exists by $\Gamma$
comprehension.  By primitive recursion, there exists a function $h
\colon \N \to \{0,1\}$ such that for all $i \in \N$, $h(i) = 1$ if and
only if $g(h \res i, i) = 1$.  For each $i \in \N$, let $B_i = \{a_j :
j < i \land h(j) = 1\}$.  An induction on $\varphi$ shows that
$\varphi(B_i)$ holds for every $i\in \N$.

Let $B = \{ a_i : h(i) = 1\} = \bigcup_{i \in \N} B_i$.  Because
Proposition~\ref{p:fcmonotone} is provable in $\RCA$ and hence in
$\Gamma\text{-}\CA$, it follows that $\varphi(B)$ holds.  By the same
token, if $\varphi(B \cup \{a_k\})$ holds for some $k$ then so must
$\varphi(B_k \cup \{a_k\})$, and therefore $a_k \in B_{k+1}$, which
means that $a_k \in B$.  Therefore $B$ is $\subseteq$-maximal, and we
have shown that $\Gamma$-$\CA$ proves $\Gamma$-$\FCP$.

For~(2), we assume $\Gamma$ is one of $\Pi^0_n$, $\Pi^1_n$, or
$\Sigma^1_n$; the proof for $\Delta^1_n$ is similar. We work in $\RCA
+ \Gamma\text{-}\FCP$.  Let $\varphi(n)$ be a formula in $\Gamma$ and
let $\psi(X)$ be the formula $(\forall n)[n \in X \Longrightarrow \varphi(n)].$ It
is easily seen that $\psi$ is of finite character, and it belongs to
$\Gamma$ because $\Gamma$ is closed under universal number quantification. 
By $\Gamma$-$\FCP$, $\N$ contains a $\subseteq$-maximal
subset $B$ such that $\psi(B)$ holds. For any~$y$, if $y \in B$ then
$\varphi(y)$ holds.  On the other hand, if $\varphi(y)$ holds then so
does $\psi(B \cup \{y\})$, so $y$ must belong to $B$ by maximality.
Therefore $B = \{y \in \N : \varphi(y) \}$, and we have shown that
$\Gamma$-$\FCP$ implies~$\Gamma$-$\CA$.
\end{proof}

The corollary below summarizes the theorem as it applies to the
various classes of formulas we are interested in.  Of special note
is part~(5), which says that $\FCP$ itself (that is, $\FCP$ for
arbitrary $\Lang_2$-formulas) is as strong as any theorem of
second-order arithmetic can be.

\begin{cor}\label{c:fcpstrength}
The following are provable in $\RCA$:
\begin{enumerate}
\item $\Delta^0_1$-$\FCP$, $\Sigma^0_0\text{-}\FCP$, and
$\QF\text{-}\FCP$;

\item for each $n \geq 1$, $\ACA$ is equivalent to $\Pi^0_n$-$\FCP$;

\item for each $n \geq 1$, $\Delta^1_n\text{-}\CA$ is equivalent to
$\Delta^1_n$-$\FCP$;

\item for each $n \geq 1$, $\Pi^1_n$-$\CA$ is equivalent to\/
$\Pi^1_n$-$\FCP$ and to $\Sigma^1_n$-$\FCP$;

\item $\mathsf{Z}_2$ is equivalent to $\FCP$.

\end{enumerate}
\end{cor}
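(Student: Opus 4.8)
The plan is to read off each part of the corollary from Theorem~\ref{thm_main_fcp} together with the standard relationships among comprehension schemes recorded by Simpson~\cite{Simpson-2009}. The common thread is that Theorem~\ref{thm_main_fcp} yields, for each $\Gamma$ appearing in its hypotheses, the equivalence of $\Gamma$-$\FCP$ and $\Gamma$-$\CA$ over $\RCA$: part~(1) gives that $\Gamma$-$\CA$ proves $\Gamma$-$\FCP$, and part~(2) gives the reverse implication. It then remains only to name the familiar subsystem to which each $\Gamma$-$\CA$ is equivalent.

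For part~(1), I would apply Theorem~\ref{thm_main_fcp}(1) with $\Gamma = \Delta^0_1$ to obtain $\Delta^0_1$-$\FCP$ from $\Delta^0_1$-$\CA$; since $\RCA$ already proves $\Delta^0_1$ comprehension, this gives $\Delta^0_1$-$\FCP$ in $\RCA$. As every quantifier-free formula is $\Sigma^0_0$ and every $\Sigma^0_0$ formula is provably $\Delta^0_1$ over $\RCA$, the schemes $\QF$-$\FCP$ and $\Sigma^0_0$-$\FCP$ are subsumed as special cases of $\Delta^0_1$-$\FCP$.

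Parts~(2)--(4) combine the equivalence of $\Gamma$-$\FCP$ and $\Gamma$-$\CA$ with the collapse of comprehension at the relevant level. For part~(2) I take $\Gamma = \Pi^0_n$ and use that $\Pi^0_n$-$\CA$ is equivalent to $\ACA$ over $\RCA$ for every $n \geq 1$: it proves $\Pi^0_1$-$\CA$, which is equivalent to $\ACA$, while $\ACA$ conversely proves $\Pi^0_n$-$\CA$ since $\Pi^0_n$ is arithmetical. For part~(3) the choice $\Gamma = \Delta^1_n$ gives the equivalence immediately. For part~(4) I use $\Gamma = \Pi^1_n$ and $\Gamma = \Sigma^1_n$ to obtain the equivalences of $\Pi^1_n$-$\FCP$ with $\Pi^1_n$-$\CA$ and of $\Sigma^1_n$-$\FCP$ with $\Sigma^1_n$-$\CA$; since complementation, available in $\RCA$, makes $\Sigma^1_n$-$\CA$ and $\Pi^1_n$-$\CA$ equivalent, all four principles coincide.

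Part~(5) is the one place where a small extra argument is needed, because $\FCP$ is the full unrestricted scheme rather than a single $\Gamma$-$\FCP$ fragment; here I would argue both directions directly from the proof of Theorem~\ref{thm_main_fcp}. The construction in the proof of part~(2) there---passing from $\varphi(n)$ to the finite-character formula $(\forall n)[n \in X \Rightarrow \varphi(n)]$ and recovering $\{n : \varphi(n)\}$ as its maximal subset---applies to an arbitrary $\Lang_2$ formula, so $\FCP$ proves full comprehension, that is, $\mathsf{Z}_2$. Conversely, each instance of $\FCP$ concerns a formula of some fixed complexity $\Sigma^1_n$ or $\Pi^1_n$, for which $\mathsf{Z}_2$ supplies the relevant $\Gamma$-$\CA$ and hence, by Theorem~\ref{thm_main_fcp}(1), that instance of $\FCP$. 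The point to verify is simply that both $\FCP$ and $\mathsf{Z}_2$ are the unions of their finite-level fragments, so that the level-by-level equivalences assemble into the stated equivalence; this bookkeeping, rather than any genuine difficulty, is the only obstacle in the corollary.
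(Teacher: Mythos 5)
Your proposal is correct and follows essentially the same route as the paper, which presents Corollary~\ref{c:fcpstrength} as a direct summary of Theorem~\ref{thm_main_fcp} combined with the standard identifications of $\Delta^0_1\text{-}\CA$ with $\RCAo$, $\Pi^0_n\text{-}\CA$ with $\ACAo$, $\Sigma^1_n\text{-}\CA$ with $\Pi^1_n\text{-}\CA$, and $\mathsf{Z}_2$ with the union of the $\Pi^1_n\text{-}\CA$. Your extra care on part~(5), checking that the schematic equivalence assembles level by level, is exactly the right (and only) bookkeeping needed.
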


The case of $\FCP$ for $\Sigma^0_1$ formulas is anomalous.  The proof
of part (2) of Theorem~\ref{thm_main_fcp} does not go through for $\Sigma^0_1$
because this class is not closed under universal quantification.  As
 the next theorem shows, this limitation is quite
significant.  Intuitively, the proof uses the fact that a $\Sigma^0_1$ formula $\varphi$ is continuous in the sense that if 
$\varphi(X)$ holds then there is an $N$ such that $\varphi(Y)$ holds for any
$Y$ with $X \cap \{0, \ldots, N\} = Y \cap \{0, \ldots, N\}$. 

\begin{thm}\label{P:Sig1_RCA} 
$\Sigma^0_1$-$\FCP$ is provable in $\RCA$.
\end{thm}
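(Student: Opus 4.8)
The plan is to exploit the continuity of $\Sigma^0_1$ formulas noted just before the statement, together with the downward monotonicity of Proposition~\ref{p:fcmonotone}(1), in order to reduce the construction of a maximal set to a finite combinatorial search carried out within $\RCA$. Write $\varphi(X) \equiv (\exists n)\theta(X,n)$ with $\theta$ bounded. Since $\varphi(\emp)$ holds by finite character, fix a witness $n_0$; because $\theta(\cdot,n_0)$ queries $X$ only below some number $N_0$ determined by $n_0$, every $Y$ with $Y \cap \{0,\dots,N_0\} = \emp$ satisfies $\varphi(Y)$. (This is precisely the continuity property, and its verification in $\RCA$ is routine.) Setting $L = \{a \in A : a > N_0\}$, which exists by $\Delta^0_1$ comprehension, we obtain $\varphi(L)$, since $L \cap \{0,\dots,N_0\} = \emp$. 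The complementary piece $P = A \cap \{0,\dots,N_0\}$ is finite, so there are only finitely many candidate ``small parts'' to adjoin to $L$.

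Next, for each finite $S \subseteq P$ (identified with its canonical index) I would form $B_S = S \cup L$; this is a $\Delta^0_1$-definable set uniformly in $S$, and substituting its definition into $\varphi$ yields a $\Sigma^0_1$ predicate $G(S) \equiv \varphi(B_S)$ with $G(\emp)$ true. I would then take $B = B_{S^*}$, where $S^*$ is a subset of $P$ with $G(S^*)$ of \emph{maximum cardinality}. Maximizing cardinality forces $\subseteq$-maximality for free: since $P$ and $L$ are disjoint with $P \cup L = A$, we have $A \setminus B_{S^*} = P \setminus S^*$, so any $a \in A \setminus B_{S^*}$ lies in $P$; were $\varphi(B_{S^*} \cup \{a\})$ to hold, then $S^* \cup \{a\}$ would be a good subset of $P$ of strictly larger cardinality, contradicting the choice of $S^*$. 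Together with $\varphi(B) = G(S^*)$, this shows $B$ is a $\subseteq$-maximal subset of $A$ satisfying $\varphi$.

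The one delicate point, and the main obstacle, is that $G$ is only $\Sigma^0_1$, hence semidecidable, so we cannot run a greedy ``add-if-still-good'' procedure inside $\RCA$. What rescues the argument is that we need only the \emph{existence} of a maximum-cardinality good subset, not a decision procedure. The predicate $R(k) \equiv (\exists S \subseteq P)[\,|S| = k \wedge G(S)\,]$ is $\Sigma^0_1$ (the quantifier over $S$ is in fact bounded, as codes of subsets of $P$ are bounded), it holds at $k = 0$, and it is bounded by $k \le |P|$. Applying the least-number principle for $\Sigma^0_1$ formulas, which is available in $\RCA$ since it follows from $\mathsf{I}\Sigma^0_1$, to the $\Sigma^0_1$ predicate $R(|P| - j)$ produces the largest $k^*$ with $R(k^*)$; instantiating the resulting existential then furnishes a witness $S^*$. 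As $B_{S^*}$ is an explicit $\Delta^0_1$ set once $S^*$ is fixed, no comprehension beyond that of $\RCA$ is used, and the construction goes through.
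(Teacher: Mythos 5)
Your proof is correct and follows essentially the same route as the paper's: both extract a threshold from a witness to $\varphi(\emptyset)$ so that any set avoiding $\{0,\dots,N_0\}$ satisfies $\varphi$, and then reduce maximality to choosing a maximum-cardinality ``good'' subset of the finite part $A \cap \{0,\dots,N_0\}$. The only difference is the device used to select that subset --- the paper invokes bounded $\Sigma^0_1$ comprehension to form the set of good indices, where you use the $\Sigma^0_1$ least-number principle --- and both are available in $\RCA$.
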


\begin{proof}
Let $\varphi(X)$ be a $\Sigma^0_1$ formula of finite
character.  We claim that there exists some $c_\varphi \in \setN$ such
 that for every set $A$, if $A \cap \{0, \ldots, c_\varphi\} = \emptyset$ then
$\varphi(A)$ holds.  To show this, put $\varphi(X)$ in normal form, so that
\[
\varphi(X) \equiv (\exists m)\rho(X[m])
\]
where $\rho$ is $\Sigma^0_0$. As $\varphi(\emptyset)$ holds, there is some 
$c = c_\varphi$  
such that $\rho(\emptyset[c])$ holds. Now let $A$ be any set such that
$A \cap \{0, \ldots, c\} = \emptyset$. Then $\rho(A[c])$ holds, 
so $\varphi(A)$ holds. This proves the claim.

Now fix any set~$A$.  By the claim, we know that $\varphi(A - \{0,
\ldots, c_\varphi\})$ holds. We may use bounded $\Sigma^0_1$
comprehension~\cite[Theorem~II.3.9]{Simpson-2009} to form the set $I$
of $m$ such that $D_m \subseteq \{0, \ldots, c_\varphi\}$ and
$\varphi(D_m \cup (A - \{0, \ldots, c_\varphi\}))$ holds.  We may then
choose $m \in I$ such that $D_m$ has maximal cardinality among the
sets with indices in~$I$.  It follows immediately that $D_m \cup (A -
\{0, \ldots, c_\varphi\})$ is a maximal subset of $A$ satisfying~$\varphi$.
\end{proof}

The above proof contains an implicit non-uniformity in choosing a finite 
set of maximal cardinality. The next proposition shows that this
non-uniformity is essential, by showing that a sequential form of
$\Sigma^0_1\text{-}\FCP$ is a strictly stronger principle.

\newpage

\begin{prop}\label{P:fcp-uniform}
The following are equivalent over $\RCA$:
\begin{enumerate}
\item $\ACA$;
\item for every family $A = \langle A_i : i \in \N \rangle$ of sets,
and every $\Sigma^0_1$ formula $\varphi(X,x)$ with one free set
variable and one free number variable such that for all $i \in \N$,
the formula $\varphi(X,i)$ is of finite character, there exists a
family $B = \langle B_i : i \in \N \rangle$ of sets such that for all
$i$, $B_i$ is a $\subseteq$-maximal subset of $A_i$ satisfying
$\varphi(X,i)$.
\end{enumerate}
\end{prop}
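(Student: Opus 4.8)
The plan is to prove both implications, viewing statement~(2) as a uniform (sequential) strengthening of Theorem~\ref{P:Sig1_RCA}. For $(1)\Rightarrow(2)$ I would work in $\ACAo$, which over $\RCAo$ is equivalent to $\Sigma^0_1$ comprehension, and uniformize the proof of Theorem~\ref{P:Sig1_RCA}. For $(2)\Rightarrow(1)$ I would use $(2)$ to derive $\Sigma^0_1$ comprehension directly, by coding an arbitrary $\Sigma^0_1$ predicate into a family of one-point maximization problems.

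For $(1)\Rightarrow(2)$, given $\langle A_i : i \in \N\rangle$ and $\varphi(X,x)$ with each $\varphi(X,i)$ a $\Sigma^0_1$ formula of finite character, write $\varphi(X,i)\equiv(\exists m)\rho(X[m],i)$ with $\rho$ being $\Sigma^0_0$. Since each $\varphi(\emp,i)$ holds, the function $i\mapsto c_i$, where $c_i$ is least with $\rho(\emp[c_i],i)$, is definable by minimization of a $\Sigma^0_0$ predicate and hence exists already in $\RCAo$; as in Theorem~\ref{P:Sig1_RCA}, $\varphi(A,i)$ holds whenever $A\cap\{0,\ldots,c_i\}=\emp$. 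The step that was non-uniform in Theorem~\ref{P:Sig1_RCA} was the choice of a finite set of maximal cardinality; to make it uniform I would invoke $\Sigma^0_1$ comprehension to form, in one set,
\[
J=\{(i,m): D_m\subseteq\{0,\ldots,c_i\}\ \wedge\ \varphi(D_m\cup(A_i-\{0,\ldots,c_i\}),\,i)\}.
\]
The matrix is a conjunction of a bounded formula with an instance of $\varphi$, and since the relevant initial segments of $D_m\cup(A_i-\{0,\ldots,c_i\})$ are uniformly computable from $i,m,c_i$ and the parameter $A$, this matrix is $\Sigma^0_1$ in $(i,m)$, so $J$ exists. For each $i$ the section $I_i=\{m:(i,m)\in J\}$ is a nonempty finite set bounded by the indices of subsets of $\{0,\ldots,c_i\}$ (the canonical index of $\emp$ lies in it), so I can let $m_i$ be the least index in $I_i$ with $D_{m_i}$ of maximal cardinality; the family $\langle m_i\rangle$ and then $\langle B_i\rangle$ with $B_i=D_{m_i}\cup(A_i-\{0,\ldots,c_i\})$ are obtained by $\Delta^0_1$ comprehension relative to $J$. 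That each $B_i$ is $\subseteq$-maximal is verified exactly as in Theorem~\ref{P:Sig1_RCA}: any addable element below $c_i$ would enlarge $D_{m_i}$ inside $I_i$, and every element above $c_i$ is already present.

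For $(2)\Rightarrow(1)$ I would derive $\Sigma^0_1$ comprehension, which suffices since it is equivalent to $\ACAo$ over $\RCAo$. Given a $\Sigma^0_1$ formula $\theta(n)$ (with parameters), set $A_n=\{0\}$ for all $n$ and let
\[
\varphi(X,n)\ \equiv\ (0\notin X)\ \vee\ \theta(n).
\]
This is $\Sigma^0_1$, and each $\varphi(X,n)$ is of finite character: $\varphi(\emp,n)$ holds, and if $\theta(n)$ fails then $\varphi(X,n)$ reduces to $0\notin X$, which plainly has finite character, while if $\theta(n)$ holds then $\varphi(X,n)$ holds of every $X$. Applying $(2)$ yields a family $\langle B_n\rangle$ of $\subseteq$-maximal subsets of $\{0\}$ satisfying the respective formulas; tracing the two subsets $\emp\subsetneq\{0\}$ shows $0\in B_n$ if and only if $\theta(n)$. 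Hence $\{n:\theta(n)\}=\{n:0\in B_n\}$ exists by $\Delta^0_1$ comprehension relative to $\langle B_n\rangle$.

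The main obstacle I anticipate lies in the forward direction: making the maximal-cardinality choice uniform. The content is precisely to recognize that the only obstruction in Theorem~\ref{P:Sig1_RCA} is the need to decide the relevant $\Sigma^0_1$ instances of $\varphi$, and that gathering all of these decisions across $i$ and $m$ into the single set $J$ is an application of $\Sigma^0_1$ comprehension, i.e.\ exactly the strength of $\ACAo$. The reverse direction is then a short coding, its only subtlety being the verification that $(0\notin X)\vee\theta(n)$ has finite character.
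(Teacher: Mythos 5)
Your proof is correct, and both directions go through, but your forward direction takes a genuinely different route from the paper's. The paper dispatches $(1)\Rightarrow(2)$ in one line by sequentializing the greedy construction in the proof of Theorem~\ref{thm_main_fcp}(1) (the function $g$ built by comprehension followed by primitive recursion, run uniformly in the index $i$ of the family), whereas you sequentialize the proof of Theorem~\ref{P:Sig1_RCA}: compute the thresholds $c_i$ in $\RCAo$, then use a single application of $\Sigma^0_1$ comprehension to decide all the finitely many candidate instances $\varphi(D_m\cup(A_i-\{0,\ldots,c_i\}),i)$ at once via the set $J$. Your version is slightly longer but has the virtue of isolating exactly where the non-uniformity of Theorem~\ref{P:Sig1_RCA} lives and showing that resolving it costs precisely one application of $\Sigma^0_1$ comprehension, which dovetails nicely with the remark following the proposition; one small point you inherit from the statement of Theorem~\ref{P:Sig1_RCA} is that the defining condition for $J$ should require $D_m\subseteq A_i\cap\{0,\ldots,c_i\}$ rather than just $D_m\subseteq\{0,\ldots,c_i\}$, so that $B_i$ is actually a subset of $A_i$. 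Your reversal is essentially the paper's coding in different clothing: the paper sets $A_i=\{i\}$ and uses $\varphi(X,x)\equiv(\exists y)[x\in X\Rightarrow f(y)=x]$ to capture the range of an injection $f$, while you set $A_n=\{0\}$ and use $(0\notin X)\vee\theta(n)$ to capture an arbitrary $\Sigma^0_1$ predicate directly; both are one-point maximization problems whose answer encodes the $\Sigma^0_1$ fact, and both yield $\ACAo$ by the standard equivalences.
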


\begin{proof} The forward implication follows by a straightforward
modification of the proof of Theorem~\ref{thm_main_fcp}.  For the
reversal, let a one-to-one function $f \colon \N \to \N$ be given.
For each $i \in \N$, let $A_i = \{i\}$, and let $\varphi(X,x)$ be the
formula
\[ 
(\exists y)[x \in X \Longrightarrow f(y) = x].
\] 
Then, for each $i$, $\varphi(X,i)$ has the finite character
property, and for every set $S$ that contains $i$, $\varphi(S,i)$
holds if and only if $i \in \operatorname{range}(f)$.  Thus, if $B =
\langle B_i : i \in \N \rangle$ is the subfamily obtained by applying
part (2) to the family $A = \langle A_i : i \in \N \rangle$ and the
formula $\varphi(X,x)$, then
\[ 
i \in \operatorname{range}(f) \Longleftrightarrow B_i = \{i\}
\Longleftrightarrow i \in B_i.
\] 
It follows that the range of $f$ exists.
\end{proof}

\begin{rem}
  Proposition~\ref{P:fcp-uniform} would not hold with
  the class of bounded-quantifier formulas
  of finite character in place of the class of $\Sigma^0_1$ such
  formulas, because in that case part (2) is provable in $\RCA$.  Thus,
  in spite of the similarity between the two classes suggested by the
  proof of Theorem~\ref{P:Sig1_RCA}, they do not coincide.
\end{rem}

\section{Finitary closure operators}\label{S:CE}

We can strengthen $\FCP$ by imposing additional requirements on the
maximal set being constructed. In particular, we now consider
requiring the maximal set to satisfy a finitary closure property as
well as a property of finite character.

\begin{defn}\label{D:CE} 
A \textit{finitary closure operator} is a set
of pairs $\langle F, n \rangle$ in which $F$ is (the canonical index
for) a finite (possibly empty) subset of $\N$ and $n \in \N$. A set $A
\subseteq \N$ is \textit{closed} under a finitary closure operator
$D$, or \emph{$D$-closed}, if for every $\langle F, n \rangle \in D$,
if $F \subseteq A$ then $n \in A$.
\end{defn}

\noindent This definition of a closure operator is not the standard set-theoretic
definition presented by Rubin and Rubin~\cite[Definition 6.3]{RR-1985}.
However, it is easy to see that for each operator of the one kind
there is an operator of the other such that the same sets are closed
under both.  Our definition has the advantage of being readily
formalizable in $\RCA$.

The following principle expresses the monotonicity of finitary closure
operators. The proof follows directly from definitions.

\begin{proposition}\label{p:clmonotone} It can be proved in $\RCAo$ that 
if $D$ is a finitary closure
operator and $A_0 \subseteq A_1 \subseteq A_2 \cdots$ is a sequence of
sets such that $\bigcup_{i \in \setN} A_i$ exists and 
each $A_i$ is $D$-closed, then $\bigcup_{i \in \N} A_i$
is $D$-closed.
\end{proposition}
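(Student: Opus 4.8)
The plan is to reduce the statement to the key combinatorial fact already isolated in the proof of Proposition~\ref{p:fcmonotone}, namely that a finite subset of an increasing union is contained in a single member of the chain. Write $B = \bigcup_{i \in \N} A_i$, which exists by hypothesis. To show that $B$ is $D$-closed, I would fix an arbitrary pair $\langle F, n \rangle \in D$ with $F \subseteq B$ and aim to prove $n \in B$; since $D$-closedness is a universal statement over such pairs, this suffices.

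First I would observe that $F$, being (the finite set coded by the canonical index appearing in) a pair of $D$, is a finite set with $F \subseteq B = \bigcup_i A_i$. The central step is then to locate a single stage of the chain containing $F$: there is some $j \in \N$ with $F \subseteq A_j$. This is exactly the point proved inside the proof of Proposition~\ref{p:fcmonotone}(2), via induction on the $\Sigma^0_1$ formula $\psi(n,F) \equiv (\exists m)(\forall i < n)(i \in F \Rightarrow i \in A_m)$ with $F$ as a set parameter; since $\Sigma^0_1$ induction is available in $\RCAo$, this step goes through. I would simply cite that argument rather than repeat it.

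With such a $j$ fixed, the conclusion is immediate from the definition of $D$-closedness. The set $A_j$ is $D$-closed by hypothesis, and $F \subseteq A_j$, so the pair $\langle F, n \rangle \in D$ forces $n \in A_j$; since $A_j \subseteq B$ we obtain $n \in B$. As $\langle F, n \rangle$ was an arbitrary pair of $D$ with $F \subseteq B$, this shows that $B$ is $D$-closed.

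The only nonroutine ingredient is the finite-subset-containment step, and that is not genuinely new: it is the same $\Sigma^0_1$ induction that underlies Proposition~\ref{p:fcmonotone}(2). Everything else is an unwinding of the definition. I therefore expect no real obstacle; the one point to be careful about is to invoke the induction rather than appeal informally to the finiteness of $F$, since in $\RCAo$ one cannot take for granted that a finite subset of a union lands in a single stage of the chain without that induction.
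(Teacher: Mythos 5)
Your proof is correct; the paper itself gives no argument here (it states only that the proposition ``follows directly from definitions''), and what you have written is exactly the intended unwinding. You are right to flag that the only non-trivial point is locating a single stage $A_j$ containing $F$, which is the same $\Sigma^0_1$ induction used in Proposition~\ref{p:fcmonotone}(2) and is available in $\RCAo$.
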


The principle in the next definition is analogous to principle
$\mathsf{AL}' \, 3$ of Rubin and Rubin~\cite{RR-1985}, which is
equivalent to the axiom of choice in the context of set theory~\cite[p.~96, and Theorems~6.4 and~6.5]{RR-1985}.

\begin{defn} \label{def:ce}
The following scheme is defined in $\RCAo$.
\begin{list}{\labelitemi}{\leftmargin=0em}\itemsep2pt
\item[]($\AL$) If $D$ is a finitary closure operator, $\varphi$ is an $\Lang_2$
formula of finite character, and $A$ is any set, then every $D$-closed
subset of $A$ satisfying $\varphi$ is contained in a maximal such
subset.
\end{list}
\end{defn}

\noindent In the terminology of Rubin and Rubin~\cite{RR-1985}, this
is a ``primed'' statement, meaning that it asserts the existence not
merely of a maximal subset of a given set, but the existence of a
maximal extension of any given subset.  Primed versions of $\FCP$ and its restrictions can be formed, and are equivalent to the unprimed versions over $\RCAo$. By contrast, $\AL$ has only a
primed form.  This is because if $A$ is a set, $\varphi$ is a formula
of finite character, and $D$ is a finitary closure operator, $A$ need
not have any $D$-closed subset of which $\varphi$ holds.  For example,
suppose $\varphi$ holds only of $\emp$, and $D$ contains a pair of the
form $\langle \emp, a \rangle$ for some $a \in A$.

This leads to the observation that the requirements in the $\AL$
scheme that the maximal set must both be $D$-closed and satisfy a
property of finite character are, intuitively, in opposition to each
other.  Satisfying a finitary closure property is a positive
requirement, in the sense that forming the closure of a set usually
requires adding elements to the set. Satisfying a property of finite
character can be seen as a negative requirement in light of part~(1)
of Proposition~\ref{p:fcmonotone}.

We consider restrictions of $\AL$ as we did restrictions of $\FCP$
above.  By analogy, if $\Gamma$ is a class of formulas, we use the
notation $\Gamma\text{-}\AL$ to denote the restriction of $\AL$ to the
formulas in~$\Gamma$.  We begin with the following analogue of part~(1) of Theorem~\ref{thm_main_fcp} from the previous section.

\begin{thm}\label{T:mainchar_CE}
For $i \in \{0,1\}$ and $n \geq 1 $,
let $\Gamma$ be $\Pi^i_n$, $\Sigma^i_n$, or $\Delta^1_n$.
Then $\Gamma$-$\AL$ is provable in $\Gamma$-$\CA$.
\end{thm}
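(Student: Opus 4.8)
The plan is to mimic the greedy construction in the proof of part~(1) of Theorem~\ref{thm_main_fcp}, inserting closure steps so that each approximating set stays $D$-closed. The one genuinely new feature is that the approximations may now be infinite (closures of finite sets under $D$), so I cannot simply evaluate $\chat{\varphi}$ on a finite code as in Lemma~\ref{l:finiteset}. The observation that rescues this is that every $\Gamma$ in the statement has $\Gamma$-$\CA \vdash \ACA$: for arithmetic $\Gamma$ this is because $\Gamma$-$\CA$ extends $\Sigma^0_1$-$\CA = \ACA$, and $\Delta^1_n$-$\CA$, $\Pi^1_n$-$\CA$, $\Sigma^1_n$-$\CA$ all extend $\ACA$ as well. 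I will use this arithmetic comprehension to manufacture the needed $D$-closures as actual sets.

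Fix an instance: a finitary closure operator $D$, a $\Gamma$ formula $\varphi$ of finite character, and sets $A \subseteq B$ with $A$ being $D$-closed and $\varphi(A)$ holding. Since the predicate $x \in \cl_D(S)$ (\emph{there is a finite $D$-derivation of $x$ from $S$}) is $\Sigma^0_1$ in $S \oplus D$, I would first use arithmetic comprehension to form the single set
\[
W = \{\langle m, x\rangle : x \in \cl_D(A \cup D_m)\},
\]
which records, uniformly in the finite set $D_m$, the closure of $A \cup D_m$. Each such closure is now the section $W_m = \{x : \langle m,x\rangle \in W\}$ of the fixed set $W$, so membership in it is a bounded formula in the parameter $W$. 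Applying exactly the substitution used in the proof of Lemma~\ref{l:finiteset} --- replacing each atom $t \in X$ of $\varphi$ by $\langle m,t\rangle \in W$ --- yields, with the same complexity $\Gamma$, a formula expressing $\varphi(\cl_D(A \cup D_m))$. A single application of $\Gamma$-comprehension (in the $\Delta^1_n$ case using both the $\Sigma^1_n$ and $\Pi^1_n$ forms of $\varphi$, as in Theorem~\ref{thm_main_fcp}) then produces the function
\[
G(m) = 1 \iff \cl_D(A \cup D_m) \subseteq B \ \text{and}\ \varphi\bigl(\cl_D(A \cup D_m)\bigr),
\]
the first conjunct being $\Pi^0_1$ in $W \oplus B$ and hence absorbed into $\Gamma$ since $n \geq 1$.

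With $G$ in hand the construction becomes primitive recursive, as in Theorem~\ref{thm_main_fcp}. By primitive recursion I define $h \colon \N \to \{0,1\}$ so that $h(s) = 1$ iff $s \in B$ and $G(m) = 1$, where $m$ is the canonical index of $F_s \cup \{s\}$ and $F_s = \{t < s : t \in B \wedge h(t) = 1\}$. Putting $X_s = \cl_D(A \cup F_s)$ (a section of $W$) and $X = \bigcup_s X_s$ (which exists by arithmetic comprehension), a routine induction --- using that $A$ is $D$-closed and satisfies $\varphi$, so that $X_0 = A$ --- shows each $X_s$ is $D$-closed, satisfies $\varphi$, and lies between $A$ and $B$; Proposition~\ref{p:clmonotone} and part~(2) of Proposition~\ref{p:fcmonotone} then transfer all three properties to $X$. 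For maximality, suppose some $D$-closed $Y$ had $X \subsetneq Y \subseteq B$ and $\varphi(Y)$, and pick $x \in Y \setminus X$. Since $x \in B$ and $x \notin X$, at stage $x$ we must have set $h(x) = 0$, so $G$ returned $0$ on the index of $F_x \cup \{x\}$; that is, $\cl_D(X_x \cup \{x\}) = \cl_D(A \cup F_x \cup \{x\})$ is either not contained in $B$ or fails $\varphi$. But $X_x \subseteq X \subseteq Y$ forces $\cl_D(X_x \cup \{x\}) \subseteq Y \subseteq B$, and part~(1) of Proposition~\ref{p:fcmonotone} together with $\varphi(Y)$ forces $\varphi(\cl_D(X_x \cup \{x\}))$ --- a contradiction either way. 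Hence $X$ is maximal, and $\Gamma$-$\AL$ is proved in $\Gamma$-$\CA$.

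The main obstacle is entirely one of complexity bookkeeping: evaluating $\varphi$ on the infinite sets $X_s$ threatens both to raise the quantifier complexity above $\Gamma$ and to turn the stage-by-stage recursion into an unbounded iteration of closures and $\varphi$-checks. Precomputing all the relevant closures into the single arithmetic set $W$ tames both difficulties at once: it reduces every $\varphi$-evaluation to $\varphi$ applied to a fixed section of $W$, of complexity $\Gamma$, and --- because each closure is determined by the finite datum $F_s$ --- it lets one $\Gamma$-comprehension (for $G$) do all the work, after which only primitive recursion remains, exactly as the single set $g$ did in the proof of Theorem~\ref{thm_main_fcp}.
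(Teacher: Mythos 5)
Your proof is correct and follows essentially the same route as the paper's: a single application of $\Gamma$-comprehension produces an oracle deciding, for each finite set of candidate elements, whether the $D$-closure stays inside the ambient set and satisfies $\varphi$, after which a primitive recursion together with Propositions~\ref{p:fcmonotone} and~\ref{p:clmonotone} and the same maximality argument finish the job. The only difference is bookkeeping: the paper expresses $\varphi(\cl_D(\cdot))$ via finite character as $(\forall n)[D_n \subseteq \cl_D(\cdot) \Longrightarrow \chat{\varphi}(n)]$, whereas you precompute the closures into the arithmetical set $W$ and substitute its sections directly into $\varphi$ --- both yield the same complexity bounds.
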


\begin{proof} 
%We work in $\Gamma\text{-}\CA$. 
Let $\varphi$ be a formula of finite character
in $\Gamma$, which may have parameters, and let $D$ be a finitary closure
 operator. Let $A$ be any set and
let $C$ be a $D$-closed subset of $A$ such that $\varphi(C)$ holds.
  
For any $X \subseteq A$, let $\cl_D(X)$ denote the \emph{$D$-closure}
of $X$.  That is, $\cl_D(X) = \bigcup_{i \in \N} X_i$, where
$X_0 = X$ and for each $i \in \N$, $X_{i+1}$ is the set of all $n \in
\N$ such that either $n \in X_i$ or there is a finite set $F \subseteq
X_i$ such that $\langle F,n \rangle \in D$.  Because we take
$D$ to be a set, $\cl_D(X)$ can be defined using a $\Sigma^0_1$ formula with 
parameter $D$.
Define a formula $\psi(k, X)$ by
\begin{align*}
\psi(k, X)  \Longleftrightarrow {} % The {} is needed for spacing
& (\forall n)[ ( D_n \subseteq \cl_D(X
\cup D_k ) \Longrightarrow \chat{\varphi}(n)] \\
&\wedge \cl_D(X \cup D_k) \subseteq A,
\end{align*}
where $\chat{\varphi}$ is as in Lemma~\ref{l:finiteset}. Note that 
$\psi$ is
arithmetical if $\Gamma$ is $\Pi^0_n$ or $\Sigma^0_n$, and is in
$\Gamma$ otherwise.

Define a function $f \colon \N \to \{0,1\}$ inductively such that $f(i)
= 1$ if and only if $\psi(\{j < i : f(j) = 1\} \cup \{i \}, C)$ holds. 
The characterization of the complexity of $\psi$ ensures that this $f$ can be constructed using $\Gamma$ comprehension, by first forming the oracle $\{ k : \psi(k, C)\}$.%, as in Lemma~\ref{l:finiteset}.

Now, for each $i \in \N$, let
\[ 
B_i = \cl_D(C \cup \{ j < i : f(j) = 1\}),
\]
 and let $B = \bigcup_{i \in \N} B_i$.  The
construction of $f$ ensures that $\varphi(B_i)$ implies
$\varphi(B_{i+1})$ for all~$i \in \setN$, and we have assumed that $\varphi$ holds of
$B_0 = \cl_D(C) = C$. Therefore, an instance of induction
shows that $\varphi$ holds of $B_i$ for all $i \in \N$, and thus also
of $B$ by Proposition~\ref{p:fcmonotone}.  This also shows that $B
\subseteq A$.  Similarly, because each $B_i$ is $D$-closed, the
formalized version of Proposition~\ref{p:clmonotone} implies $B$ is
$D$-closed.

Finally, we check that $B$ is maximal.   Suppose that $H$ is a $D$-closed set
such that $B \subseteq H \subseteq A$ and $\varphi(H)$ holds. Fixing $i \in H$,
because 
$B_i \subseteq B \subseteq H$ and $H$ is $D$-closed, we have $\cl_D(B_i \cup \{i\}) \subseteq H$. 
 Thus, $\varphi(F)$ holds for every finite subset $F$ of
$\cl_D(B_i \cup \{i\})$, so by construction $f(i) = 1$ and $B_{i+1} =
\cl_D(B_i \cup \{i\})$. 
% Here we are using the fact that for all sets
%$X$ and all $a,b \in \N$, $\cl_D(X \cup \{a,b\}) = \cl_D(\cl_D(X \cup
%\{a\}) \cup \{b\})$.  
Because $B_{i+1} \subseteq B$, we conclude that $i
\in B$. Thus $B = H$, as desired.
\end{proof}

It follows that for most standard syntactical classes $\Gamma$,
$\Gamma\text{-}\AL$ is equivalent to $\Gamma\text{-}\FCP$.  Indeed,
for any class $\Gamma$ we have that $\Gamma\text{-}\AL$ implies
$\Gamma\text{-}\FCP$, because any instance of the latter can be regarded
as an instance of the former by adding an empty finitary closure
operator.  Conversely, if $\Gamma$ is $\Pi^0_n$, $\Pi^1_n$, $\Sigma^1_n$, or
$\Delta^1_n$, then $\Gamma\text{-}\FCP$ is equivalent to
$\Gamma\text{-}\CA$ by Theorem~\ref{thm_main_fcp}~(2), and hence
equivalent to $\Gamma\text{-}\AL$.  Thus, in particular, parts (2)--(5)
of Corollary~\ref{c:fcpstrength} hold for $\AL$ in place of $\FCP$,
and the full scheme $\AL$ itself is equivalent to $\mathsf{Z}_2$.

The proof of the preceding theorem does not work for $\Gamma =
\Delta^0_1$, because then $\Gamma\text{-}\CA$ is just $\RCA$, and we
need at least $\ACA$ to prove the existence of the function $f$
defined there (the formula $\psi(\sigma,X)$ being arithmetical at
best).  The next theorem shows that this cannot be avoided, even
for a class of considerably weaker formulas.

\begin{thm}\label{P:alqf_implies_aca} 
$\QF\text{-}\AL$ implies $\ACA$ over $\RCA$.
\end{thm}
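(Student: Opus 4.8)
The plan is to prove $\QF\text{-}\AL \to \ACA$ over $\RCA$ by showing that $\QF\text{-}\AL$ lets us recover the range of an arbitrary injection $f \colon \N \to \N$, since the existence of ranges of injections is equivalent to $\ACA$ over $\RCAo$. The idea is to design a finitary closure operator $D$ and a quantifier-free formula $\varphi$ of finite character so that the maximal $D$-closed set satisfying $\varphi$ encodes exactly which numbers lie in $\ran(f)$.

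The construction I have in mind uses a pairing-based encoding. First I would fix an injection $f$ and set up a closure operator $D$ whose pairs force certain ``witness'' elements into any closed set. Concretely, for each $s$ I want the closure mechanism to propagate information: when $f(s) = n$ is discovered, closure should pull a marker for $n$ into the set. The key is that $D$ is a genuine set (it exists in $\RCA$ because $f$ is given), so $\cl_D$ is well-defined. Simultaneously I want a quantifier-free $\varphi$ that acts as a negative constraint, so that the maximal closed set satisfying $\varphi$ settles, for each $n$, whether a marker for $n$ is present, and this presence is equivalent to $n \in \ran(f)$.

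The main obstacle is that $\varphi$ must be genuinely quantifier-free and of finite character, which is a severe restriction: a quantifier-free formula can only inspect membership of finitely many explicitly named numbers, so I cannot write a formula that directly ``says'' $n \notin \ran(f)$. This is exactly why the closure operator must do the quantifier-level work. The trick is to let $D$ carry the unbounded search (it is a fixed set, so checking $\langle F, n\rangle \in D$ is only membership in a set, requiring no new quantifiers), while $\varphi$ merely forbids certain finite configurations, for instance forbidding two distinct markers that code the same value, or forbidding a marker to appear together with a ``blocking'' element. The finite-character and quantifier-free demands mean $\varphi$ should be something like a conjunction of clauses over elements of a candidate finite set $F$, each clause a Boolean combination of atomic membership statements $t \in X$ for explicit terms $t$.

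Thus the steps in order are: (i) recall that $\ACA$ is equivalent over $\RCA$ to the existence of $\ran(f)$ for every injection $f$; (ii) given $f$, define the finitary closure operator $D$ using $f$ as a parameter so that $D$-closure forces a marker for $n$ precisely when $f$ takes the value $n$; (iii) define a quantifier-free finite-character $\varphi$ imposing a negative constraint whose maximal satisfaction, combined with $D$-closure, pins down the markers; (iv) apply $\QF\text{-}\AL$ starting from an appropriate $D$-closed subset (for instance the closure of $\emptyset$, or a suitable initial set) inside an ambient set $A$ large enough to contain all relevant markers; (v) read off from the resulting maximal set $B$ the equivalence $n \in \ran(f) \Longleftrightarrow (\text{marker for } n) \in B$, and conclude that $\ran(f)$ is $\Delta^0_1$ in $B$ and hence exists in $\RCA$. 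I expect step (iii) to be the crux, as balancing the opposing positive (closure) and negative (finite character) requirements so that the unique maximal set is forced is the delicate part of the argument.
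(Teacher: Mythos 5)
Your high-level strategy is the same as the paper's: reduce $\ACA$ to computing $\ran(f)$ for an injection $f$, let the closure operator $D$ (built from $f$ as a parameter) carry all the unbounded information, and use a purely negative quantifier-free $\varphi$ so that maximality decides membership in the range. However, the proposal stops exactly at what you yourself call the crux, step (iii), and the missing idea there is not a routine detail. The difficulty is this: the natural encoding (a ``marker'' $\langle i,n\rangle$ that $D$ poisons when $f(n)=i$) only gives you $i \in \ran(f) \Longleftrightarrow (\exists n)(\text{marker}\ \langle i,n\rangle \notin B)$, which is $\Sigma^0_1$ in $B$ rather than $\Delta^0_1$, so you cannot conclude in $\RCAo$ that $\ran(f)$ exists. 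You need a single, fixed ``test element'' for each $i$ whose membership in the maximal set $B$ is outright equivalent to $i \notin \ran(f)$.

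The paper's construction achieves this with an all-or-nothing component trick that your sketch does not contain. Take $\varphi(X)$ to be $0 \notin X$ and let $D$ contain, for all $i,n$, the pairs $\langle \{p_i^{n+1}\}, p_i^{n+2}\rangle$ and $\langle \{p_i^{n+2}\}, p_i^{n+1}\rangle$ (so the positive powers of the $i$-th prime $p_i$ are bidirectionally linked: any $D$-closed set contains all of them or none), together with $\langle \{p_i^{n+1}\}, 0\rangle$ whenever $f(n)=i$. If $i \in \ran(f)$, any positive power of $p_i$ in $B$ would propagate to the poisoned power and force $0 \in B$, so no power of $p_i$ lies in $B$; if $i \notin \ran(f)$, then $B \cup \{p_i^{n+1} : n \in \N\}$ is still $D$-closed and satisfies $\varphi$, so maximality forces every power, in particular $p_i$, into $B$. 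Hence $i \in \ran(f) \Longleftrightarrow p_i \notin B$, a single membership query. Without the bidirectional linking (or some equivalent device collapsing the existential quantifier over $n$), the argument does not close, so as written the proposal has a genuine gap at its central step.
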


\begin{proof}
Assume a one-to-one function $f \colon \N \to \N$ is given.
Let $\varphi(X)$ be the quantifier-free formula $0 \notin X$, which
trivially has finite character, and let \mbox{$\langle p_i: i \in \N
\rangle$} be an enumeration of all primes.  Let $D$ be the finitary
closure operator consisting, for all $i, n \in \N$, of all pairs of
the form
\begin{itemize}
\item $\langle \{p_i^{n+1}\},p_i^{n+2} \rangle$;

\item $\langle \{p_i^{n+2}\},p_i^{n+1} \rangle$;

\item $\langle \{p_i^{n+1}\},0 \rangle$, if $f(n) = i$.
\end{itemize} 
The set $D$ exists by $\Delta^0_1$ comprehension
relative to $f$ and our enumeration of primes.

Note that $\emp$ is a $D$-closed subset of $\N$ and $\varphi(\emp)$
holds.  Thus, we may apply $\AL$ for quantifier-free formulas to
obtain a maximal $D$-closed subset $B$ of $\N$ such that $\varphi(B)$
holds.  By definition of $D$, for every $i \in \N$, $B$ either
contains every positive power of $p_i$ or no positive power.  Now if
$f(n) = i$ for some~$n$, then no positive power of $p_i$ can be in $B$,
because otherwise $p_i^{n+1}$ would necessarily be in $B$ and hence so
would~$0$.  On the other hand, if $f(n) \neq i$ for all $n$ then $B
\cup \{p_i^{n+1} : n \in \N \}$ is $D$-closed and satisfies $\varphi$,
so by maximality $p^{n+1}_i$ must belong to $B$ for every~$n$.  It
follows that $i \in \operatorname{range}(f)$ if and only if $p_i \not\in
B$, so the range of $f$ exists.
\end{proof}

The next corollary can be contrasted with 
\ref{c:fcpstrength}~part~(1) and Theorem~\ref{P:Sig1_RCA} to illustrate a
difference between $\AL$ from $\FCP$ in terms of
some of their weakest restrictions. 

\begin{cor}\label{c:alequiv} 
The following are equivalent over $\RCA$:
\begin{enumerate}
\item $\ACA$;
\item $\Sigma^0_1\text{-}\AL$;
\item $\Sigma^0_0\text{-}\AL$;
\item $\QF\text{-}\AL$.
\end{enumerate}
\end{cor}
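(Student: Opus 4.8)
The plan is to prove the four statements equivalent by closing the cycle $(1) \Rightarrow (2) \Rightarrow (3) \Rightarrow (4) \Rightarrow (1)$. All of the substantive work has already been carried out in Theorems~\ref{T:mainchar_CE} and~\ref{P:alqf_implies_aca}; the corollary is obtained by combining these with two elementary observations about the syntactic classes $\QF$, $\Sigma^0_0$, and $\Sigma^0_1$.

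For the implication $(1) \Rightarrow (2)$, I would apply Theorem~\ref{T:mainchar_CE} in the case $\Gamma = \Sigma^0_1$, which yields that $\Sigma^0_1\text{-}\AL$ is provable in $\Sigma^0_1\text{-}\CA$. It then suffices to invoke the standard fact that $\Sigma^0_1$ comprehension is equivalent to arithmetical comprehension over $\RCA$ (see Simpson~\cite{Simpson-2009}), so that $\ACA$ already proves $\Sigma^0_1\text{-}\AL$. The middle implications $(2) \Rightarrow (3)$ and $(3) \Rightarrow (4)$ are immediate from the inclusions $\QF \subseteq \Sigma^0_0 \subseteq \Sigma^0_1$: every instance of the principle for a smaller class of formulas is literally an instance of the principle for a larger class, so restricting $\AL$ to a larger class gives a formally stronger scheme. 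In particular, the possible presence of set parameters in the $\Sigma^0_0$ and $\Sigma^0_1$ formulas causes no difficulty, since the inclusions run in the direction we need.

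Finally, $(4) \Rightarrow (1)$ is precisely the content of Theorem~\ref{P:alqf_implies_aca}, and this closes the cycle. There is no genuine obstacle in assembling the argument; the only point meriting attention is the identification of $\Sigma^0_1\text{-}\CA$ with $\ACA$ used in the first step. What makes the statement worth recording is the contrast it exposes: by Corollary~\ref{c:fcpstrength}~(1) and Theorem~\ref{P:Sig1_RCA} the corresponding restrictions of $\FCP$ are provable in $\RCA$, yet once a finitary closure operator is allowed, even the quantifier-free restriction $\QF\text{-}\AL$ already carries the full strength of $\ACA$.
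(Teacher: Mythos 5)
Your proposal is correct and is exactly the intended assembly: Theorem~\ref{T:mainchar_CE} with $\Gamma = \Sigma^0_1$ (together with the equivalence of $\Sigma^0_1$ comprehension and $\ACA$) gives $(1)\Rightarrow(2)$, the inclusions of formula classes give $(2)\Rightarrow(3)\Rightarrow(4)$, and Theorem~\ref{P:alqf_implies_aca} gives $(4)\Rightarrow(1)$. The paper leaves this proof implicit, and your version fills it in in the same way, including the correct observation about the direction of the syntactic inclusions.
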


We conclude this section with one additional illustration of how
formulas of finite character can be used in conjunction with finitary
closure operators.  Recall the following concepts from order theory:
\begin{itemize}
\item a \textit{countable join-semilattice} is a countable poset
$\langle L, \leq_L \rangle $ with a maximal element $1_L$ and 
a join operation $\lor_L \colon L \times L \to L$ such that for all $a,b \in
L$, $a \lor_L b$ is the least
upper bound of $a$ and~$b$;

\item an \textit{ideal} on a countable join-semilattice $L$ is a
subset $I$ of $L$ that is downward closed under $\leq_L$ and closed
under $\lor_L$.

\end{itemize} 
The principle in the following proposition is the
countable analogue of a variant of $\mathsf{AL}' \, 1$ in Rubin and
Rubin~\cite{RR-1985}; compare with Proposition~\ref{P:NCE_ideals}
below.  For more on the computability theory of ideals on lattices,
see Turlington~\cite{Turlington-2010}.

\begin{prop}\label{p:alextend}
Over $\RCA$, $\QF\text{-}\AL$ implies that every proper
ideal on a countable join-semilattice extends to a maximal proper ideal.
\end{prop}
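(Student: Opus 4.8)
The plan is to realize this as a single application of $\QF\text{-}\AL$, arranging a finitary closure operator whose closed sets are exactly the ideals of $L$ and a quantifier-free formula of finite character that expresses properness. Fix a countable join-semilattice $\langle L, \leq_L\rangle$ with maximal element $1_L$ and join $\lor_L$, and identify the elements of $L$ with their codes, so that $L$, the relation $\leq_L$, and the function $\lor_L$ are all available as parameters. First I would let $D$ be the finitary closure operator consisting of the pairs $\langle \{b\}, a\rangle$ for all $a \leq_L b$, which force downward closure, together with the pairs $\langle \{a,b\}, a \lor_L b\rangle$ for all $a,b \in L$, which force closure under binary join. Since $\leq_L$ and $\lor_L$ are given as parameters, $D$ is defined by a bounded formula and so exists by $\Delta^0_1$ comprehension in $\RCA$. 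The first routine verification is that a subset $X \subseteq L$ is $D$-closed if and only if $X$ is an ideal of $L$; this is immediate from the definition of $D$, using that closure under binary join yields closure under all finite joins.

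Next I would take $\varphi(X)$ to be the quantifier-free formula $c \notin X$, where $c$ is the code of $1_L$. Exactly as in the proof of Theorem~\ref{P:alqf_implies_aca}, $\varphi$ has finite character: $\varphi(\emp)$ holds, and $c \notin X$ holds precisely when $c \notin F$ for every finite $F \subseteq X$. A small but important observation is that in a join-semilattice the maximal element is automatically the greatest: since $1_L \lor_L x \geq_L 1_L$ and $1_L$ is maximal, we get $1_L \lor_L x = 1_L$, hence $x \leq_L 1_L$ for every $x$. This fact, provable in $\RCA$, is what lets us equate properness with the omission of $1_L$: for an ideal $X$, downward closure shows $X = L$ iff $1_L \in X$, so $X$ is proper iff $\varphi(X)$ holds. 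Thus ``$D$-closed subset of $L$ satisfying $\varphi$'' and ``proper ideal of $L$'' name the same collection of sets.

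Finally, given a proper ideal $I$, it is a $D$-closed subset of $A := L$ with $\varphi(I)$, so $\QF\text{-}\AL$ yields a set $J$, containing $I$, that is maximal among $D$-closed subsets of $L$ satisfying $\varphi$. By the correspondence above, $J$ is a proper ideal extending $I$, and to finish I would verify its maximality as a proper ideal: if $J'$ is any proper ideal with $J \subseteq J' \subseteq L$, then $J'$ is $D$-closed and satisfies $\varphi$, so $J = J'$ by the maximality clause of $\AL$. Hence $J$ is a maximal proper ideal extending $I$. I expect no serious obstacle here; the entire content lies in choosing the definitions so that the two maximality notions coincide, and the only points needing care are the equivalence ``$D$-closed $\Leftrightarrow$ ideal'' and the use of the top element $1_L$ to reduce properness to the omission of a single coded element, both of which are straightforward.
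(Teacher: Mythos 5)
Your proposal is correct and takes essentially the same route as the paper's proof: the same finitary closure operator (downward-closure pairs plus join pairs), the same quantifier-free formula $1_L \notin X$, and the same identification of $D$-closed subsets with ideals, followed by a single application of $\QF\text{-}\AL$. The only difference is that you explicitly verify that the maximal element of a join-semilattice is its greatest element so that properness coincides with omitting $1_L$, a detail the paper leaves implicit.
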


\begin{proof}
Let $L$ be a countable join-semilattice.  Let $\varphi$
be the formula $1 \not \in X$, and let $D$ be the finitary closure
operator consisting of all pairs of the form
\begin{itemize}
\item $\langle \{a,b\}, c\rangle$ where $a,b \in L$ and $c = a \lor b$;
\item $\langle \{a\}, b\rangle$, where $b \leq_L a$.
\end{itemize}
Because we define a join-semilattice to come with both
the order relation and the join operation, the set $D$ is $\Delta^0_0$
with parameters, so $\RCAo$ proves $D$ exists. It is immediate that a
set $X$ is closed under $D$ if and only if $X$ is an ideal in~$L$.
\end{proof}

We have not been able to prove a reversal corresponding to the previous proposition.

\begin{quest}
What is the strength of the principle asserting that every proper ideal on
a countable join-semilattice extends to a maximal proper ideal?
\end{quest}

\noindent  This question is further motivated by work of
Turlington~\cite[Theorem 2.4.11]{Turlington-2010}
on the similar problem of constructing prime
ideals on computable lattices.  However, because a maximal ideal on a
countable lattice need not be a prime ideal, Turlington's results do not
directly resolve our question.

\section{Nondeterministic finitary closure operators}\label{S:NCE}

It appears that the underlying reason that the restriction of $\AL$
to arithmetical formulas is provable in $\ACAo$ (and more generally,
why $\Gamma\text{-}\AL$ is provable in $\Gamma\text{-}\CA$ if $\Gamma$
is as in Theorem~\ref{T:mainchar_CE}) is that our definition of
finitary closure operator is very constraining.  Intuitively, if $D$
is such an operator and $\varphi$ is an arithmetical
formula, and we seek to extend some $D$-closed subset 
$B$ satisfying $\varphi$ to a maximal
such subset, we can focus largely on ensuring 
that $\varphi$ holds.  Achieving closure under $D$ is
relatively straightforward, because at each stage we only need to search through
all finite subsets $F$ of our current extension, and then adjoin all $n$
such that $\langle F,n \rangle \in D$.  This closure process becomes
far less trivial if we are given a choice of which elements
to adjoin. We now consider the case when each finite subset $F$
can be associated with a possibly infinite set of numbers  
from which we must choose at least one to adjoin. Intuitively, this
change adds an aspect of dependent choice when we wish to form the closure of a set. We will show that this weaker
notion of closure operator leads to a strictly stronger analogue of $\AL$.

\begin{defn}
A \textit{nondeterministic finitary closure operator} is a sequence 
of sets of the form $\langle F, S\rangle$ where $F$ is (the canonical index for)
a finite (possibly empty) subset 
of $\N$ and $S$ is a nonempty subset of~$\N$. A set $A \subseteq \N$ 
is \textit{closed} under a nondeterministic finitary closure operator 
$N$, or $N$-closed, if for each $\langle F, S \rangle$ in $N$, if 
$F \subseteq A$ then $A \cap S \neq \emp$.
\end{defn}

Note that if $D$ is a \emph{deterministic} finitary closure operator,
that is, a finitary closure operator in the stronger sense of the
previous section, then for any set $A$ there is a unique
$\subseteq$-minimal $D$-closed set extending~$A$. This is not true for
nondeterministic finitary closure operators. For example, let $N$ be the 
operator such that $\langle\emptyset,\N\rangle \in N$ and, for each $i
\in \N$ and each $j > i$, $\langle\{i\},\{j\}\rangle \in N$.  Then any
$N$-closed set extending $\emptyset$ will be of the form $\{i \in \N :
i \geq k\}$ for some~$k$, and any set of this form is $N$-closed. Thus
there is no $\subseteq$-minimal $N$-closed set.

In this section we study the following nondeterministic version of
$\AL$.

\begin{defn}\label{D:NCE}
The following scheme is defined in $\RCAo$.
\begin{list}{\labelitemi}{\leftmargin=0em}\itemsep2pt
\item[]($\mathsf{NCE}$) If $N$ is a nondeterministic closure operator,
$\varphi$ is an $\Lang_2$ formula of finite character, and $A$ is any set, then
every $N$-closed subset of $A$ satisfying $\varphi$ is contained in a
maximal such subset.
\end{list}
\end{defn}

\noindent 
Because the union of a
chain of $N$-closed sets is again $N$-closed, $\NCE$ can be
proved in set theory using Zorn's lemma.
 Restrictions of $\NCE$ to various syntactical classes of formulas are
defined as for $\AL$ and $\FCP$. 

\begin{rem}\label{rem:nce_remark} 
We might expect to be able to prove
$\NCE$ from $\AL$ by suitably transforming a given nondeterministic
finitary closure operator $N$ into a deterministic one.  For instance,
we could go through the members of $N$ one by one, and
for each such member $\langle F,S \rangle$ add $\langle F, n \rangle$
to $D$ for some $n \in S$ (e.g., the least $n$).  All $D$-closed sets
would then indeed be $N$-closed.  The converse, however, would not
necessarily be true, because a set could have $F$ as a subset for some
$\langle F,S \rangle \in N$, yet it could contain a different $n \in
S$ than the one chosen in defining~$D$.  In particular, a maximal
$D$-closed subset of a given set might not be maximal
among $N$-closed subsets. The results of this section 
demonstrate that it is impossible, in general, to 
reduce nondeterministic closure operators 
to deterministic ones in weak systems.
\end{rem}

Recall that an \emph{ideal} on a countable poset $\langle P,
\leq_P \rangle$ is a subset $I$ of $P$ downward closed under $\leq_P$
and such that for all $p,q \in I$ there is an $r \in I$ with $p \leq_P
r$ and $q \leq_P r$.  The next proposition is similar to
Proposition~\ref{p:alextend} above, which dealt with ideals on
countable join-semilattices.  In the proof of that proposition, we
defined a deterministic finitary closure operator $D$ in such a way
that $D$-closed sets were closed under the join operation.  For this
we relied on the fact that for every two elements in the semilattice
there is a unique element that is their join.  The reason we need
nondeterministic finitary closure operators below is that, for ideals
on countable posets, there are no longer unique elements witnessing the
relevant closure property.

\begin{prop}\label{P:NCE_ideals} 
Over $\RCAo$, $\QF\text{-}\mathsf{NCE}$ implies that every ideal on a countable
poset can be extended to a maximal ideal.
\end{prop}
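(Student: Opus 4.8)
The plan is to adapt the proof of Proposition~\ref{p:alextend}, replacing the deterministic join operator by a nondeterministic operator that forces directedness. Fix a countable poset $\langle P,\leq_P\rangle$ and an ideal $I_0$ on it. By passing to an isomorphic copy of $P$ if necessary---a harmless $\Delta^0_1$ recoding---I may assume $0\notin P$, and I reserve $0$ as a ``sink'' point. Let $\varphi(X)$ be the quantifier-free, parameter-free formula $0\notin X$, which trivially has finite character, and set $A=P\cup\{0\}$.

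First I would define the nondeterministic finitary closure operator $N$. For each $a,b\in P$ with $b\leq_P a$, I put the (deterministic) pair $\langle\{a\},\{b\}\rangle$ into $N$; these force downward closure. For each $p,q\in P$ I put in the pair $\langle\{p,q\},S_{p,q}\rangle$, where $S_{p,q}=\{r\in P: p\leq_P r\wedge q\leq_P r\}\cup\{0\}$; these force directedness. Because $P$ comes equipped with $\leq_P$ as a parameter, each $S_{p,q}$ is $\Delta^0_0$ with parameters and is nonempty (it contains $0$), so $N$ exists by $\Delta^0_1$ comprehension, just as the operator $D$ did in Proposition~\ref{p:alextend}.

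The heart of the argument is the claim that the $N$-closed subsets of $A$ satisfying $\varphi$ are exactly the ideals on $P$. If $X\subseteq A$ is $N$-closed and $\varphi(X)$ holds, then $0\notin X$, so $X\subseteq P$; the singleton pairs make $X$ downward closed; and for $p,q\in X$ the pair $\langle\{p,q\},S_{p,q}\rangle$ forces $X\cap S_{p,q}\neq\emptyset$, which (as $0\notin X$) yields a common upper bound of $p$ and $q$ lying in $X$, so $X$ is directed. Conversely any ideal $I$ on $P$ satisfies $0\notin I$ and is immediately $N$-closed. Since the two collections coincide, their notions of $\subseteq$-maximality coincide as well, and applying $\QF\text{-}\NCE$ to $N$, $\varphi$, $A$, and the starting set $I_0$ produces a maximal ideal extending $I_0$.

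The single genuine difficulty is the treatment of a pair $p,q$ having no common upper bound in $P$: no ideal may contain both, yet the definition of a nondeterministic operator forbids an empty choice set, so I cannot simply omit such pairs. Adjoining $0$ to every $S_{p,q}$ keeps all choice sets nonempty, while the formula $\varphi$, by excluding $0$, ensures that when $S_{p,q}=\{0\}$ the constraint $X\cap S_{p,q}\neq\emptyset$ becomes unsatisfiable and therefore bars $p$ and $q$ from lying in $X$ together. This interplay between the operator and the finite-character formula is precisely what the deterministic operators of Proposition~\ref{p:alextend} could not achieve, and it is what makes the nondeterministic framework necessary here.
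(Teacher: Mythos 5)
Your proof is correct and follows essentially the same route as the paper's: the paper adjoins a top element $t$ to the poset and uses $\varphi(X)\equiv t\notin X$, so that $t$ keeps every choice set for directedness nonempty while the finite-character formula bars it from the maximal set, which is exactly the role your reserved sink $0$ plays in each $S_{p,q}$. The only difference is cosmetic (you add the sink to the choice sets directly rather than extending the order), and your closing observation about why nondeterminism is essential matches the paper's discussion preceding the proposition.
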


\begin{proof} 
%We work in $\RCAo$. 
Let $\langle P, \leq_P \rangle$ be a
countable poset; without loss of generality we may assume $P$ 
is infinite.  Form an extended poset $\widehat{P}$ by adjoining a 
new element $t$ to $P$ and declaring $q <_{\widehat{P}} t$ for all $q \in P$. 
It follows immediately that the ideals on $P$ correspond exactly to the ideals of $\widehat{P}$ that do not contain $t$, 
and each ideal on $\widehat{P}$ which is maximal
among ideals not containing $t$ corresponds to a maximal ideal on~$P$.

Fix an enumeration $\{p_i : i \in \setN\}$ of $\widehat{P}$. 
 We form a nondeterministic closure
operator $N= \langle N_i : i \in \N\rangle$ such that, for each $i \in \N$,
\begin{itemize}
\item if $i = 2\langle j,k\rangle$ and $p_j \leq_{\widehat{P}} p_k$ then $N_i =
\langle \{p_k\},\{p_j\}\rangle$;

\item if $i = 2\langle j,k,l\rangle + 1$ and $p_j \leq_{\widehat{P}} p_l$ and $p_k
\leq_{\widehat{P}} p_l$ then 
\[ 
N_i = \langle \{p_j,p_k\}, \{p_n : (p_j \leq_{\widehat{P}} p_n) 
   \land (p_k \leq_{\widehat{P}} p_n)\}\rangle;
\]

\item otherwise,  $N_i = \langle\{p_i\},\{p_i\}\rangle$.

\end{itemize}
This construction gives a quantifier-free definition of
each $N_i$ uniformly in~$i$, so $\RCAo$ is able to construct~$N$.
Moreover, a subset of $\widehat{P}$ is $N$-closed if and only if it is an ideal.

Let $\varphi(X)$ be the formula $t \not\in X$, which is of finite character.
Fix an ideal $I \subseteq P$. Viewing $I$ as a subset of $\widehat{P}$, we see that $I$ is $N$-closed and $\varphi(I)$ holds. Thus,  by $\QF\text{-}\mathsf{NCE}$, there is a maximal $N$-closed extension $J \subseteq \widehat{P}$ satisfying~$\varphi$. This immediately yields a maximal ideal on $P$ extending $I$.
\end{proof}

Mummert~\cite[Theorem~2.4]{Mummert-2006} showed that the proposition
that every ideal on a countable poset extends to a maximal ideal is
equivalent to $\Pi^1_1\text{-}\CA$ over $\RCA$, which leads to the following corollary. 
This contrasts sharply with Theorem~\ref{T:mainchar_CE}, which showed
 that $\AL$ for arithmetical formulas is provable in $\ACA$. 
%
%so we see that the idea of Remark~\ref{rem:nce_remark}
%fundamentally cannot work.

\begin{cor}\label{c:qfnce_strength}
$\QF\text{-}\NCE$ implies $\Pi^1_1\text{-}\CA$ over $\RCAo$.  
\end{cor}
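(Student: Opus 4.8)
The plan is simply to chain the preceding proposition together with the cited result of Mummert, so the proof is essentially a one-line composition of two implications already in hand. First I would invoke Proposition~\ref{P:NCE_ideals}, which establishes that, over $\RCAo$, the principle $\QF\text{-}\NCE$ proves the statement that every ideal on a countable poset extends to a maximal ideal. Next I would invoke Mummert's Theorem~2.4 in~\cite{Mummert-2006}, which shows that this maximal-ideal-extension statement is equivalent to $\Pi^1_1\text{-}\CA$ over $\RCAo$; in particular, it \emph{implies} $\Pi^1_1\text{-}\CA$. Since implication over a fixed base theory is transitive, composing these two steps yields that $\QF\text{-}\NCE$ implies $\Pi^1_1\text{-}\CA$ over $\RCAo$, which is exactly the claim. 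Note that only the one direction of Mummert's equivalence is needed here, namely that the extension principle is at least as strong as $\Pi^1_1\text{-}\CA$.

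There is no real obstacle to overcome, as both ingredients are established results; the only thing to confirm is a matter of bookkeeping, namely that the maximal-ideal-extension statement produced by Proposition~\ref{P:NCE_ideals} is verbatim the statement that Mummert proves equivalent to $\Pi^1_1\text{-}\CA$. Granting that the $\RCAo$-formalizations of ``countable poset,'' ``ideal,'' and ``maximal ideal'' coincide in the two sources, the two implications compose without any gap, and the corollary follows at once.
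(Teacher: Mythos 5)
Your proposal matches the paper's own argument exactly: the corollary is obtained by composing Proposition~\ref{P:NCE_ideals} (over $\RCAo$, $\QF\text{-}\NCE$ proves that every ideal on a countable poset extends to a maximal ideal) with the forward direction of Mummert's Theorem~2.4, which shows that this extension principle implies $\Pi^1_1\text{-}\CA$ over $\RCAo$. Your bookkeeping caveat about matching formalizations is the only thing to check, and it is unproblematic here.
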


We will state the precise strength of $\QF\text{-}\NCE$ in Corollary~\ref{t:ncereverse}
below.
%To obtain a precise characterizaion of the strengh to 
%$\QF\text{-}\NC$, 
We must first prove the following upper bound.
 The proof uses a
technique involving countable coded $\beta$-models, parallel to
Lemma~2.4 of Mummert~\cite{Mummert-2006}.  In $\ACA$, a
\emph{countable coded $\beta$-model} is defined as a sequence $\M =
\langle M_i : i \in \N \rangle$ of subsets of $\N$ such that for every $\Sigma^1_1$
formula $\varphi$ with parameters from $\M$, $\varphi$ holds if and
only if $\M \models \varphi$. $\Pi^1_1\text{-}\CA$ proves that every set
is included in some countable coded $\beta$-model. Complete information on 
countable coded $\beta$-models is given by Simpson~\cite[Section~VII.2]{Simpson-2009}.

\begin{thm}\label{p:nceprovable} 
$\Sigma^1_1\text{-}\mathsf{NCE}$ is provable in $\Pi^1_1\text{-}\CA$.
\end{thm}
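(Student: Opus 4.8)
The plan is to prove that $\Sigma^1_1\text{-}\NCE$ holds in any model of $\Pi^1_1\text{-}\CA$ by exploiting countable coded $\beta$-models. So fix an instance: a nondeterministic closure operator $N$, a $\Sigma^1_1$ formula $\varphi(X)$ of finite character (with parameters), a set $A$, and an $N$-closed $C \subseteq A$ with $\varphi(C)$. Using $\Pi^1_1\text{-}\CA$, first obtain a countable coded $\beta$-model $\M$ containing $N$, $A$, $C$, and all parameters of $\varphi$. The point of working inside $\M$ is twofold. Since $\M$ is a $\beta$-model, it is $\Sigma^1_1$- (hence $\Pi^1_1$-) correct, so the finite-character property of $\varphi$ and the truth of $\varphi$ on any set coded in $\M$ transfer faithfully between $\M$ and the real world. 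Moreover, $\M$ satisfies enough of second-order arithmetic (in particular $\ACA_0$, and more) that one can carry out a closure construction inside it.

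The key steps, in order, are as follows. First I would carry out a stagewise construction, relative to $\M$, of an increasing sequence $B_0 \subseteq B_1 \subseteq \cdots$ of $N$-closed subsets of $A$, each satisfying $\varphi$ and each extending $C$. At each stage I process a candidate element $a_i$ of $A$ together with a pending list of nondeterministic closure obligations: when some $\langle F, S\rangle \in N$ has $F \subseteq B_i$ but $B_i \cap S = \emp$, I must choose some $n \in S$ to adjoin. The crucial use of the $\beta$-model is that the question ``does there exist an $N$-closed set $X$ with $B_i \cup \{n\} \subseteq X \subseteq A$ and $\varphi(X)$?'' is a $\Sigma^1_1$ question (using that $\varphi$ is $\Sigma^1_1$ and $N$-closure is arithmetical in $N$), so $\M$ decides it correctly. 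I adjoin $a_i$, or resolve a closure obligation by choosing $n \in S$, exactly when doing so is consistent with the existence of such an $N$-closed superset satisfying $\varphi$; the $\beta$-correctness guarantees that a witnessing $X$ really exists and hence that $\varphi(B_{i+1})$ genuinely holds by Proposition~\ref{p:fcmonotone}(1). Second, I let $B = \bigcup_i B_i$, which exists because the construction is arithmetical in the code for $\M$; by Proposition~\ref{p:fcmonotone}(2) we get $\varphi(B)$, and because the union of a chain of $N$-closed sets is $N$-closed, $B$ is $N$-closed. Third, I verify maximality: if $H \supseteq B$ is $N$-closed, $H \subseteq A$, and $\varphi(H)$ holds, then for any $a \in H$ the set $H$ itself witnesses the relevant $\Sigma^1_1$ existence statement at the stage $a$ was considered, so the construction would already have placed $a$ (and resolved obligations toward it) into $B$; hence $H = B$.

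The main obstacle I expect is handling the nondeterministic choices so that maximality is preserved, which is exactly the difficulty flagged in Remark~\ref{rem:nce_remark}: committing to a particular $n \in S$ too early can block a larger $N$-closed set. The device that overcomes this is to never commit to a choice unless the $\beta$-model certifies that the choice extends to a full $N$-closed subset of $A$ satisfying $\varphi$; the $\Sigma^1_1$-correctness of $\M$ converts this ``lookahead'' into a decidable condition and simultaneously guarantees that a genuine witness exists in the real world. A secondary technical point is bookkeeping: I must interleave the processing of potential new elements $a_i$ with the servicing of closure obligations generated by earlier choices, arranging the construction so that every obligation is eventually met (each $\langle F,S\rangle \in N$ with $F \subseteq B$ is addressed) while the whole recursion remains arithmetical relative to the code for $\M$, so that $B$ is a set available in the ground model. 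Once the construction is seen to be $\beta$-model-guided in this way, the verification of the three closure and maximality properties is routine given Propositions~\ref{p:fcmonotone} and the fact that unions of chains of $N$-closed sets are $N$-closed.
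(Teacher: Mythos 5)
Your proposal is correct and follows essentially the same route as the paper: a countable coded $\beta$-model containing $A$, $C$, $N$, and the parameters of $\varphi$; a greedy increasing sequence $B_0 \subseteq B_1 \subseteq \cdots$ guided by $\Sigma^1_1$ lookahead questions that the $\beta$-model answers correctly; and maximality obtained because any larger $N$-closed $H \subseteq A$ with $\varphi(H)$ witnesses the relevant $\Sigma^1_1$ sentence, forcing the construction to have already acted. The one difference is that the paper dispenses entirely with the obligation bookkeeping you flag as your main technical burden, by letting $B_{i+1}$ be a whole set $M_j$ of the $\beta$-model that is $N$-closed, contains $i$, extends $B_i$, and satisfies $\varphi$ (when such a $j$ exists); since each $B_{i+1}$ is then already $N$-closed, the closure of $B$ is immediate and no interleaving of elements with pending pairs $\langle F,S\rangle$ is required.
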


\begin{proof}
%We work in $\Pi^1_1\text{-}\CA$. 
Let $\varphi$ be a
$\Sigma^1_1$ formula of finite character (possibly with parameters)
and let $N$ be a nondeterministic closure operator. Let $A$ be any set
and let $C$ be an $N$-closed subset of $A$ such that $\varphi(C)$
holds.

Let $\M = \langle M_i : i \in \N\rangle$ be a countable coded
$\beta$-model containing $A$, $C$, $N$, and any parameters of
$\varphi$.  Using $\Pi^1_1$ comprehension,
we may form the set $\{i : \M \models \varphi(M_i)\}$.

Working outside $\M$, we build an increasing sequence $\langle B_i :
i \in \N\rangle$ of $N$-closed extensions of~$C$. Let $B_0 = C$.
Given~$i$, ask whether there is a $j$ such that
\begin{itemize}
\item $M_j$ is an $N$-closed subset of $A$;
\item $B_i \subseteq M_j$;
\item $i \in M_j$;
\item and $\varphi(M_j)$ holds.
\end{itemize} 
If there is, choose the least such $j$ and let
$B_{i+1} = M_j$. Otherwise, let $B_{i+1} = B_i$.  Finally, let $B =
\bigcup_{i\in \N} B_i$.

Because the inductive construction only asks arithmetical questions
about $\M$, it can be carried out in $\Pi^1_1\text{-}\CA$, and so
$\Pi^1_1\text{-}\CA$ proves that $B$ exists. Clearly $C \subseteq B
\subseteq A$.  An arithmetical induction shows that for all $i \in
\N$, $\varphi(B_i)$ holds and $B_i$ is $N$-closed.  Therefore, the
formalized version of Proposition~\ref{p:fcmonotone} shows that
$\varphi(B)$ holds, and the analogue of Proposition~\ref{p:clmonotone}
for nondeterministic finitary closure operators shows that $B$ is
$N$-closed.

Now suppose that $H$ is an $N$-closed
set such that $B \subseteq H \subseteq A$ and~$\varphi(H)$ holds. 
Fix $i \in H$.  Because $\varphi$
is $\Sigma^1_1$, the property
\begin{equation}\label{eq:betamod} 
(\exists X)[X \text{ is
$N$-closed} \land B_i \subseteq X \subseteq A \land i \in X \land
\varphi(X)]
\end{equation}
\noindent is expressible by a $\Sigma^1_1$ sentence with parameters from $\M$,
 and $H$ witnesses that it is true.
  Thus, because $\M$ is a
$\beta$-model, this sentence must be satisfied by $\M$, which means that
some $M_j$ must also witness it. The inductive construction must
therefore have selected such an $M_j$ to be $B_{i+1}$, which means $i
\in B_{i+1}$ and hence $i \in B$.  It follows that $B$ is maximal.
\end{proof}

%The next theorem shows that $\mathsf{NCE}$ for quantifier-free
%formulas without parameters is already as strong as
%$\Sigma^1_1\text{-}\FCP$ and $\Sigma^1_1\text{-}\AL$.  In particular,
%in view of Corollary~\ref{c:alequiv}, $\mathsf{NCE}$ is considerably stronger than
%$\AL$ for quantifier-free formulas.

We can now characterize the strength of $\Sigma^1_1\text{-}\NCE$ and its restrictions.

\begin{cor}\label{t:ncereverse} 
For each $n \geq 1$, the following are equivalent over $\RCAo$:
\begin{enumerate}
\item $\Pi^1_1\text{-}\CA$;
\item $\Sigma^1_1\text{-}\mathsf{NCE}$;
\item $\Sigma^0_n\text{-}\mathsf{NCE}$;
\item $\QF\text{-}\mathsf{NCE}$.
\end{enumerate}
\end{cor}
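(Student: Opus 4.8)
The plan is to close the cycle of implications
\[
(1) \Rightarrow (2) \Rightarrow (3) \Rightarrow (4) \Rightarrow (1),
\]
almost all of whose links are already in hand. The two substantive bounds have been proved above: Theorem~\ref{p:nceprovable} gives the upper bound and Corollary~\ref{c:qfnce_strength} gives the lower bound. What remains is essentially bookkeeping about the syntactic classes to which the various restrictions of $\NCE$ apply, and I expect no genuine obstacle.

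First, $(1) \Rightarrow (2)$ is exactly the content of Theorem~\ref{p:nceprovable}, which states that $\Sigma^1_1\text{-}\NCE$ is provable in $\Pi^1_1\text{-}\CA$. Dually, $(4) \Rightarrow (1)$ is precisely Corollary~\ref{c:qfnce_strength}, which states that $\QF\text{-}\NCE$ implies $\Pi^1_1\text{-}\CA$ over $\RCAo$. These are the only two nontrivial links.

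The remaining implications $(2) \Rightarrow (3)$ and $(3) \Rightarrow (4)$ follow from the inclusions of the relevant formula classes. Every quantifier-free formula (even one without parameters) is a $\Sigma^0_n$ formula for each $n \geq 1$, and every $\Sigma^0_n$ formula is a $\Sigma^1_1$ formula. Since being of finite character is a semantic condition on $\varphi$ that is unaffected by regarding $\varphi$ as lying in a larger syntactic class, every instance of $\QF\text{-}\NCE$ is an instance of $\Sigma^0_n\text{-}\NCE$, and every instance of the latter is an instance of $\Sigma^1_1\text{-}\NCE$. Hence, over $\RCAo$, the principle $\Sigma^1_1\text{-}\NCE$ implies $\Sigma^0_n\text{-}\NCE$, which in turn implies $\QF\text{-}\NCE$. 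Combining the four implications yields the equivalence of all four statements over $\RCAo$, uniformly in $n \geq 1$. The only point meriting explicit mention is the monotonicity just used: restricting $\NCE$ to a larger class of formulas yields a logically stronger principle, so that the matching upper and lower bounds of $\Pi^1_1\text{-}\CA$ supplied by Theorem~\ref{p:nceprovable} and Corollary~\ref{c:qfnce_strength} pinch every intermediate restriction to the same strength.
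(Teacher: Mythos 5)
Your proof is correct and follows exactly the same route as the paper's: the cycle $(1)\Rightarrow(2)\Rightarrow(3)\Rightarrow(4)\Rightarrow(1)$, with Theorem~\ref{p:nceprovable} supplying $(1)\Rightarrow(2)$, Corollary~\ref{c:qfnce_strength} supplying $(4)\Rightarrow(1)$, and the remaining links following from inclusions of the formula classes. Your explicit justification of the ``obvious'' implications via monotonicity of the restriction in the formula class is a harmless elaboration of what the paper leaves implicit.
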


\begin{proof} 
Theorem~\ref{p:nceprovable} shows that (1) implies (2), and it is
obvious that (2) implies (3) and (3) implies (4).
 Corollary~\ref{c:qfnce_strength} shows that (4) implies~(1).
%Proposition~\ref{P:NCE_ideals}.
\end{proof}

Our final results characterize the strength of
$\mathsf{NCE}$ for formulas higher in the analytical
hierarchy. 

%\newpage

\begin{thm}\label{P:ncehigher}
For each $n \geq 1$,
\begin{enumerate}
\item $\Sigma^1_n\text{-}\mathsf{NCE}$ and $\Pi^1_n\text{-}\mathsf{NCE}$
are provable in  $\Pi^1_n\text{-}\mathsf{CA}_0$;
\item $\Delta^1_n\text{-}\mathsf{NCE}$ is provable in
$\Delta^1_n\text{-}\mathsf{CA}_0$.
\end{enumerate}
\end{thm}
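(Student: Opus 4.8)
The plan is to reduce both parts to the already-proved $\Sigma^1_1$ case, Theorem~\ref{p:nceprovable}, by first collapsing the complexity of $\varphi$ into a single set parameter. The observation that makes this possible is that a formula of finite character is determined by its behavior on finite sets: combining the finite-character hypothesis with Lemma~\ref{l:finiteset}, $\RCAo$ proves
\[
\varphi(X) \Longleftrightarrow (\forall m)[D_m \subseteq X \Longrightarrow \chat{\varphi}(m)].
\]
Hence, if I can form the set $W = \{m : \chat{\varphi}(m)\}$, then on every set $\varphi$ is provably equivalent to $\varphi^{*}(X) \equiv (\forall m)[D_m \subseteq X \Rightarrow m \in W]$, a formula that is $\Pi^0_1$ with the single parameter $W$ and is itself of finite character. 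A maximal $N$-closed subset of $A$ satisfying $\varphi^{*}$ is then exactly a maximal $N$-closed subset satisfying $\varphi$, so it suffices to solve the instance for $\varphi^{*}$.

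First I would form $W$. For part~(1), Lemma~\ref{l:finiteset} gives $\chat{\varphi}$ the same complexity as $\varphi$, so $W$ exists by $\Sigma^1_n$ or $\Pi^1_n$ comprehension, both available in $\Pi^1_n\text{-}\CA$. For part~(2), where $\varphi$ is $\Sigma^1_n$, $\psi$ is $\Pi^1_n$, and $(\forall X)[\varphi(X)\leftrightarrow\psi(X)]$, I would note that evaluating the equivalence at each finite set $D_m$ yields $\chat{\varphi}(m)\leftrightarrow\chat{\psi}(m)$ via Lemma~\ref{l:finiteset}; since $\chat{\varphi}$ is $\Sigma^1_n$ and $\chat{\psi}$ is $\Pi^1_n$, the set $W$ now exists by $\Delta^1_n$ comprehension. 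This is precisely the step that keeps part~(2) inside $\Delta^1_n\text{-}\CA$.

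With $W$ formed, the remaining task is an instance of $\NCE$ for $\varphi^{*}$, which is $\Sigma^1_1$ with parameter $W$; I would dispatch it by applying Theorem~\ref{p:nceprovable} to $\varphi^{*}$, $N$, $A$, and the given set $C$. It then remains only to check that $\Pi^1_1\text{-}\CA$ is available in the ambient system. For part~(1) this is immediate, since $\Pi^1_1\text{-}\CA$ is a subsystem of $\Pi^1_n\text{-}\CA$ for all $n \geq 1$. For part~(2) I would use $\Pi^1_{n-1}\subseteq\Delta^1_n$, so that $\Delta^1_n\text{-}\CA$ proves $\Pi^1_{n-1}\text{-}\CA$ and hence $\Pi^1_1\text{-}\CA$ whenever $n \geq 2$. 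I expect this availability check to be the main obstacle: by Corollary~\ref{t:ncereverse} the residual problem genuinely costs $\Pi^1_1\text{-}\CA$, so the cheap formation of $W$ does not by itself suffice, and for part~(2) the $\Delta^1_n$ upper bound improves on part~(1) only when $n \geq 2$ (for $n = 1$ the two bounds coincide at $\Pi^1_1\text{-}\CA$).
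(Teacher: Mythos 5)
Your proof is correct and follows essentially the same route as the paper's: use Lemma~\ref{l:finiteset} to form $W = \{m : \chat{\varphi}(m)\}$ by the appropriate comprehension, replace $\varphi$ by the equivalent arithmetical formula $(\forall m)[D_m \subseteq X \Longrightarrow m \in W]$ of finite character, and discharge the resulting instance via Theorem~\ref{p:nceprovable}. Your closing caveat about part~(2) at $n=1$ is not a weakness of your argument but a genuine defect of the statement: since a quantifier-free formula is trivially both $\Sigma^1_1$ and $\Pi^1_1$, the scheme $\Delta^1_1\text{-}\NCE$ subsumes $\QF\text{-}\NCE$ and hence implies $\Pi^1_1\text{-}\CA$ by Corollary~\ref{c:qfnce_strength}, so it cannot be provable in $\Delta^1_1\text{-}\CA$; part~(2) is correct only for $n \geq 2$, and the paper's own proof (which likewise routes through $\Sigma^1_1\text{-}\NCE$ and merely says part~(2) is ``similar'') has exactly the limitation you identified.
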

\begin{proof}

We prove part (1), the proof of part (2) being similar.  Let $\varphi(X)$
be a $\Sigma^1_n$ formula of finite character, 
respectively a $\Pi^1_n$ such formula.  Let
$N$ be a nondeterministic closure operator,  let $A$
be any set, and let $C$ be an $N$-closed subset of $A$ such
that $\varphi(C)$ holds.

By Lemma~4.5, let $\widehat{\varphi}$ be a $\Sigma^1_n$
formula, respectively a $\Pi^1_n$ formula, such that
\[
(\forall X)(\forall n)[X = D_n \Longrightarrow (\varphi(X)
\Longleftrightarrow \widehat{\varphi}(n))].
\]
We may use $\Pi^1_n$ comprehension to
form the set $W = \{ n : \widehat{\varphi}(n)\}$. Define 
$\psi(X)$ to be the arithmetical formula $(\forall n)[D_n \subseteq X \Longrightarrow n \in
W]$. 

We claim that for  every set $X$, $\psi(X)$ holds if and
  only if $\varphi(X)$ holds.  The definitions of $W$ and
$\psi$
ensure that $\psi(X)$ holds if and only if $\varphi(D_n)$ holds for
every finite $D_n \subseteq X$, which is true if and only if
$\varphi(X)$ holds because $\varphi$ has finite character.  This establishes
the claim.

By the claim, $\psi$ is a property of finite character and
$\psi(C)$ holds. Using $\Sigma^1_1\text{-}\mathsf{NCE}$, which is provable in
$\Pi^1_1\text{-}\mathsf{CA}_0$ by Theorem~\ref{p:nceprovable} and thus is provable in
$\Pi^1_n\text{-}\mathsf{CA}_0$, there is a maximal
$N$-closed subset $B$ of $A$ extending~$C$ with
property~$\psi$. Again by the claim, $B$ is a maximal
$N$-closed subset of $A$ extending $B$ with
property~$\varphi$.
\end{proof}
\newpage

\begin{cor}
The following are provable in $\RCAo$:
\begin{enumerate}
\item for each $n \geq 1$, $\Delta^1_n\text{-}\CA$ is equivalent to
$\Delta^1_n\text{-}\mathsf{NCE}$;
\item for each $n \geq 1$, $\Pi^1_n\text{-}\CA$ is equivalent to
$\Pi^1_n\text{-}\mathsf{NCE}$ and to $\Sigma^1_n\text{-}\mathsf{NCE}$;
\item $\mathsf{Z}_2$ is equivelent to $\NCE$.
\end{enumerate}
\end{cor}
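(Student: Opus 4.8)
The plan is to prove the final corollary by combining the upper bounds from Theorem~\ref{P:ncehigher} with reversals that piggyback on the already-established equivalences for $\AL$ and $\FCP$. For each part, the strategy splits into an implication in each direction: one direction uses the provability results of the previous theorem, and the other establishes that the relevant comprehension scheme follows from the corresponding restriction of $\NCE$.

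First I would dispatch the forward implications (comprehension implies $\NCE$). Part~(1) follows because $\Delta^1_n\text{-}\NCE$ is provable in $\Delta^1_n\text{-}\CA$ by Theorem~\ref{P:ncehigher}~(2); part~(2) follows because $\Pi^1_n\text{-}\NCE$ and $\Sigma^1_n\text{-}\NCE$ are both provable in $\Pi^1_n\text{-}\CA$ by Theorem~\ref{P:ncehigher}~(1); and part~(3) follows since full $\mathsf{Z}_2$ proves $\Gamma\text{-}\CA$ for every $\Gamma$ in the analytical hierarchy, hence proves every instance of $\NCE$.

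For the reversals (each $\NCE$ restriction implies the matching comprehension scheme), the key observation is that any instance of $\Gamma\text{-}\AL$ can be read as an instance of $\Gamma\text{-}\NCE$: a deterministic finitary closure operator $D$ is the special case of a nondeterministic one in which each pair $\langle F,n\rangle$ is replaced by $\langle F,\{n\}\rangle$, and a set is $D$-closed exactly when it is closed under the resulting singleton-valued operator. Hence $\Gamma\text{-}\NCE$ implies $\Gamma\text{-}\AL$ for every class $\Gamma$. Since $\Gamma\text{-}\AL$ in turn implies $\Gamma\text{-}\FCP$ (by adjoining an empty closure operator, as noted after Theorem~\ref{T:mainchar_CE}), and Theorem~\ref{thm_main_fcp}~(2) gives that $\Gamma\text{-}\FCP$ implies $\Gamma\text{-}\CA$ when $\Gamma$ is $\Pi^1_n$, $\Sigma^1_n$, or $\Delta^1_n$, the desired reversals follow immediately. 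For part~(3), the equivalence of $\FCP$ with $\mathsf{Z}_2$ recorded in Corollary~\ref{c:fcpstrength}~(5) transfers upward through these same implications to give that $\NCE$ implies $\mathsf{Z}_2$.

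I do not expect any genuine obstacle here, as the corollary is essentially bookkeeping over the previously established reductions $\Gamma\text{-}\NCE \Rightarrow \Gamma\text{-}\AL \Rightarrow \Gamma\text{-}\FCP \Rightarrow \Gamma\text{-}\CA$ together with the upper bounds of Theorem~\ref{P:ncehigher}. The only point requiring a moment of care is confirming that the $\Delta^1_n$ case of the reversal goes through with the same chain of implications; this is handled by the fact that Theorem~\ref{thm_main_fcp}~(2) and its surrounding discussion cover $\Delta^1_n$ as well, so $\Delta^1_n\text{-}\NCE$ yields $\Delta^1_n\text{-}\CA$ by the same route.
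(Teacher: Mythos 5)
Your proposal is correct and follows essentially the same route as the paper: upper bounds from Theorem~\ref{P:ncehigher}, and reversals obtained by reducing to the already-established strength of $\FCP$ via Theorem~\ref{thm_main_fcp}~(2) and Corollary~\ref{c:fcpstrength}. The only cosmetic difference is that you interpose $\Gamma\text{-}\AL$ between $\Gamma\text{-}\NCE$ and $\Gamma\text{-}\FCP$, whereas the paper notes directly that each restriction of $\NCE$ trivially implies the corresponding restriction of $\FCP$ (via a trivial closure operator and the empty starting set); both reductions are immediate and equally valid.
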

\begin{proof}
The implications from $\Delta^1_n\text{-}\CA$, $\Pi^1_n\text{-}\CA$, and
$\mathsf{Z}_2$ follow by Theorem~\ref{P:ncehigher}. On the other hand, 
each restriction of $\NCE$ trivially implies the corresponding restriction of $\FCP$,
so the reversals follow by Corollary~\ref{c:fcpstrength}.
\end{proof}

\begin{rem}
  The characterizations in this section shed light on the role of the
  closure operator in the principles $\AL$ and $\NCE$. For
  $n \geq 1$, we have shown that $\Sigma^1_n\text{-}\FCP$,
  $\Sigma^1_n\text{-}\AL$, and $\Sigma^1_n\text{-}\NCE$ are
  all equivalent over $\RCAo$. However, $\QF\text{-}\FCP$ is
  provable in $\RCAo$, $\QF\text{-}\AL$ is equvalent to
  $\ACAo$ over $\RCAo$, and $\QF\text{-}\NCE$ is equivalent
  to $\Pi^1_1\text{-}\CA$ over $\RCAo$. Thus the closure
  operators in the stronger principles serve as a sort of
replacement for arithmetical quantification in the case of
$\AL$, and for $\Sigma^1_1$ quantification in the case of
$\NCE$. This allows these principles to have greater strength
than might be suggested by the property of finite character
alone.   At higher levels of the analytical hierarchy, the
principles become equivalent because the 
complexity of the property of finite character overtakes the
complexity of the closure notions.
\end{rem}

\bibliographystyle{amsplain} \bibliography{Choice}

\end{document}